\documentclass[12pt]{amsart}
\usepackage{ amsmath, amsthm, amsfonts, amssymb, color}
 \usepackage{mathrsfs}
\usepackage{amsfonts, amsmath}
 \usepackage{amsmath,amstext,amsthm,amssymb,amsxtra}
 \usepackage{txfonts} %also pxfonts
 \usepackage[colorlinks, citecolor=blue,pagebackref,hypertexnames=false]{hyperref}
 \allowdisplaybreaks
 \usepackage{pgf,tikz}
 \usepackage{multirow}%newtablepackage1
 \usepackage{diagbox} %newtablepackage2

 \usepackage{tikz}

\usetikzlibrary{decorations.pathreplacing}

\begin{document}

 \baselineskip 16.6pt
\hfuzz=6pt

\widowpenalty=10000

\newtheorem{cl}{Claim}
\newtheorem{theorem}{Theorem}[section]

\newtheorem{proposition}[theorem]{Proposition}
\newtheorem{corollary}[theorem]{Corollary}
\newtheorem{lemma}[theorem]{Lemma}

\theoremstyle{definition}
\newtheorem{definition}[theorem]{Definition}

\newtheorem{assum}{Assumption}[section]
\newtheorem{example}[theorem]{Example}
\newtheorem{Pro}[theorem]{Problem}
\newtheorem{remark}[theorem]{Remark}
\renewcommand{\theequation}
{\thesection.\arabic{equation}}

\def\SL{\sqrt H}

\newcommand{\mar}[1]{{\marginpar{\sffamily{\scriptsize
        #1}}}}

\newcommand{\as}[1]{{\mar{AS:#1}}}

\newcommand\R{\mathbb{R}}
\newcommand\RR{\mathbb{R}}
\newcommand\CC{\mathbb{C}}
\newcommand\NN{\mathbb{N}}
\newcommand\ZZ{\mathbb{Z}}
\newcommand\HH{\mathbb{H}}
\def\RN {\mathbb{R}^n}
\renewcommand\Re{\operatorname{Re}}
\renewcommand\Im{\operatorname{Im}}

\newcommand{\mc}{\mathcal}
\newcommand\D{\mathcal{D}}
\def\hs{\hspace{0.33cm}}
\newcommand{\la}{\alpha}
\def \l {\alpha}
\newcommand{\eps}{\tau}
\newcommand{\pl}{\partial}
\newcommand{\supp}{{\rm supp}{\hspace{.05cm}}}
\newcommand{\x}{\times}
\newcommand{\lag}{\langle}
\newcommand{\rag}{\rangle}

\newcommand\wrt{\,{\rm d}}

% 字母编号的定理环境（独立计数器，纯字母编号）
\newtheorem{theoremA}{Theorem}
\renewcommand{\thetheoremA}{\Alph{theoremA}} % 如Theorem A, Theorem B

\title[]{On the relationship between block spaces and Orlicz spaces}
\maketitle
%\author{Yanping Chen$^*$}\thanks{$*$ Corresponding Author}
%\author{Teng Wang}
%\author{Huoxiong Wu}
%
%\address{Yanping Chen: Department of Mathematics, Northeastern University, Shenyang 110004, China}
%\email{yanpingch@126.com}
%
%\address{Teng Wang: School of Mathematics and Physics, University of Science and Technology Beijing, Beijing 100083, China}
%\email{tengwustb@126.com}
%
%\address{Huoxiong Wu: School of Mathematical Sciences, Xiamen University, Xiamen, Fujian
%361005, People's Republic of China}
%\email{huoxwu@xmu.edu.cn}

%\vspace{-0.5cm}
%\begin{center}
%
%{\bf Yanping Chen \footnote{\small {Corresponding author.\ }}}\\
%Department of Mathematics,\\
%Northeastern University,\\
%Shenyang 110004, China\\
%E-mail: {\it yanpingch@126.com}
%\vskip 0.1cm
%{\bf Zhengyang Ji }\\
%Department of Mathematics,\\
%Northeastern University,\\
%Shenyang 110004, China\\
%E-mail: {\it zhengyang.ji.zjut@gmail.com}
%\vskip 0.1cm
%{\bf Teng Wang}\\
%School of Mathematics and Physics,\\
% University of Science and Technology Beijing,\\
%Beijing 100083,  China \\
%E-mail: {\it tengwustb@126.com} 
%\vskip 0.1cm
%{\bf Huoxiong Wu}\\
%School of Mathematical Sciences,\\
%Xiamen University,\\
%Xiamen, Fujian 361005, China \\
%E-mail: {\it huoxwu@xmu.edu.cn} 
%\vskip 0.1cm
%\end{center}
%\vspace{-1cm}

\begin{center}
\author{%
    Yanping Chen$^{1,*}$\qquad 
    Zhengyang Ji$^{1}$\qquad 
    Teng Wang$^{2}$\qquad 
    Huoxiong Wu$^{3}$
}
\end{center}

% 在 maketitle 之后、abstract 之前，用 \footnotetext 手动定义脚注内容
\footnotetext[1]{%
    Department of Mathematics, Northeastern University, Shenyang 110004, China. \\
    E-mail: Yanping Chen: \texttt{yanpingch@126.com}; 
    Zhengyang Ji: \texttt{zhengyang.ji.zjut@gmail.com}.}
\footnotetext[2]{%
    School of Mathematics and Physics, University of Science and Technology Beijing, 
    Beijing 100083, China. \\
    E-mail: \texttt{tengwustb@126.com}.}
\footnotetext[3]{%
    School of Mathematical Sciences, Xiamen University, Xiamen, Fujian 361005, China. \\
    E-mail: \texttt{huoxwu@xmu.edu.cn}.}
% 关键：用 \renewcommand 临时改一下脚注格式，让 [1] 显示为 *
{
\renewcommand{\thefootnote}{\fnsymbol{footnote}}
\footnotetext[1]{Corresponding author.}
}
% 大括号的作用：格式改动的范围只在大括号内部，不影响外面的脚注
\date{\today}

\renewcommand{\thefootnote}{}
\footnote{The project was in part supported by: Yanping Chen's
		National Natural Science Foundation of China (\# 12525105, \# 12371092) and Huoxiong Wu's
		National Natural Science Foundation of China (\# 12171399, \# 12271041).}
\footnote{2020 \emph{Mathematics Subject Classification}.
 42B35, 46E30.}

\footnote{\emph{Key words and phrases}. Block spaces, Orlicz spaces,  Generalized block spaces, Ahlfors-regular spaces.}

\vspace{-1cm}
\begin{abstract}
Let $1<q\le \infty$ and $v>-1$. Let $(X,d,\mu)$ be an $s$-Ahlfors-regular quasi-metric measure space.
Suppose that  $B^{0,v}_q(X)$ is the block space  which consists of all functions that admit a decomposition into $q$-blocks supported on balls.
In this paper, we study the relationship between the block space $B^{0,v}_q(X)$ and the Orlicz-type space
$L(\log^+\!\!L)^{1+v}(X)$.

More precisely, we show that the block space $B_q^{0,v}(X)$ is a proper subspace of the Orlicz space $L(\log^+\!\!L)^{1+v}(X)$ for any fixed  $1<q\le \infty$ and $v>-1$. Namely,
$$B_q^{0,v}(X)\subsetneq L(\log^+\!\!L)^{1+v}(X),$$ 
which gives a confirmed answer to  a longstanding open problem concerning the relationship between block spaces and Orlicz-type spaces on the unit sphere $\mathbb S^{n-1}$.
We further show that $L(\log^+\!\!L)^{1+v}(X)$   is  the smallest  Orlicz-type space containing  $B^{0,v}_{q}(X)$.

We also introduce a generalized block space $\mathscr B_q^{0,v}(X)$ that depends only on the measure structure and show that this space is equivalent to the Orlicz space $L(\log^+\!\!L)^{1+v}(X)$ when $\mu(X)<\infty$.

Finally,  we consider two special cases that further clarify the roles of the parameter $q$ and the logarithmic weight.

\end{abstract}
	\maketitle

\arraycolsep=1pt

\section{Introduction}

%开篇点明主题

%定义部分

Let $(X,d,\mu)$ be an $s$-Ahlfors-regular quasi-metric measure space (see \cite{Alv15, Dav97}).
That is, $X$ is a set, $d:X\times X\to[0,\infty)$ is a quasi-metric satisfying
\begin{enumerate}
\item $d(x,y)=d(y,x)$ for all $x,y\in X$;
\item $d(x,y)=0$ if and only if $x=y$;
\item there exists a constant $A\ge1$ such that
\[
d(x,y)\le A\bigl(d(x,z)+d(z,y)\bigr) \text{~for all~} x,y,z\in X. 
\]
\end{enumerate}
Moreover, $\mu$ is a nonzero Borel regular measure on $X$ for which there exist constants
$c,C,s>0$ such that
\begin{equation}\label{upc}
c\,r^s
\le
\mu(B(x,r))
\le
C\,r^s,
\qquad
x\in X,\quad
0<r<\operatorname{diam}(X),
\end{equation}
where
$
B(x,r)=\{y\in X:d(x,y)<r\}
$
and
$
\operatorname{diam}(X)
=
\sup \{d(x,y):x,y\in X\}.
$

The class of $s$-Ahlfors-regular quasi-metric measure spaces
provides a unified framework for many settings.
It includes, in particular, the Euclidean space $\mathbb R^n$,
the unit sphere $\mathbb S^{n-1}$,
and homogeneous groups equipped with their Haar measures.

Let $1<q\le \infty$.
A measurable function $a$ on $X$ is called a \emph{$q$-block} if there exists a ball $B \subset X$ such that
$0<\mu(B)<\infty $ and
\[
\supp a \subseteq B, \quad  \|a\|_{L^q(B)} \le \mu(B)^{-1/q'}.
\]
Thus a $q$-block is a function supported on a ball whose $L^q$-norm is normalized according to the measure of that ball.

For $k\ge 0$ and $v>-1$, define the function
\[
\phi_{k,v}(t) = \chi_{(0,1)}(t) \int_t^1 s^{-1-k}\bigl(\log^+(1/s)\bigr)^v\,ds, \quad t>0,
\]
where $\log^+ t = \max\{\log t,0\}$. With $\phi_{k,v}$ at hand, we now introduce the block spaces $B^{k,v}_q(X)$.

\begin{definition}[Block space $B^{k,v}_q(X)$]  {\it Let $1<q\le \infty$, $k\ge 0$ and $v>-1.$
The block space $B^{k,v}_q(X)$ consists of all functions $f \in L^1(X)$ that admit a decomposition
\[
f = \sum_{j=1}^\infty \lambda_j a_j,
\]
where each $a_j$ is a $q$-block supported on a ball $B_j$, and 
\[
\sum_{j=1}^\infty |\lambda_j|\, \bigl(1+\phi_{k,v}(\mu(B_j))\bigr) < \infty.
\]}
\end{definition}

The space $B^{k,v}_q(X)$ is equipped with the norm
\[
\|f\|_{B^{k,v}_q(X)} := \inf \Big\{ \sum_{j=1}^\infty |\lambda_j| \Bigl( 1+\phi_{k,v}\big(\mu(B_j)\big) \Bigr) \Big\},
\]
where the infimum is taken over all decompositions $ f=\sum_{j=1}^\infty \lambda_j a_j$ such that $\lambda_j\in\mathbb C$ and each $a_j$ is a $q$-block supported on a ball $B_j$.
It is straightforward to verify that
$B^{k,v}_q(X)$
is a Banach space.

Roughly speaking, the space $B^{k,v}_q(X)$ consists of functions that admit a decomposition into $q$-blocks supported on balls, where the logarithmic weight $\phi_{k,v}$ controls the contribution of $q$-blocks to the norm via the measure of those balls.
 
%来源背景
The method of block decomposition of functions in $\mathbb R^n$  originated in the work of
Taibleson and  Weiss  on the convergence of the Fourier series in connection with the developments of the real Hardy spaces
(see \cite{Taib83}). Since then, many applications of the block decomposition in harmonic
analysis were discovered (see \cite{FZ17, Lu84, LuWang92, MTW85, Zalo88}). In particular, Lu, Taibleson and Weiss \cite{LuTaW82} established the convergence of Bochner--Riesz means on certain block spaces. For more details, readers may see the book \cite{LuTaW89}, which is a good reference on this topic. 
On the other hand, Blasco, Ruiz and Vega \cite{Bla99} introduced block spaces as preduals of Morrey spaces, and this theory was recently extended to spaces of homogeneous type in \cite{Dun24}.
Particularly, one can find 
in \cite{LuTaW89} (see also \cite{Lu84}) that 
$$ B^{k,v}_q(X)\subseteq  B^{k,u}_q(X),\,\,\,\hbox{for}\,\,v>u,k\in \mathbb R; $$ and
$$B^{k,\alpha}_q(X)\subseteq B^{j, \beta}_q(X)\subseteq  B^{0,\gamma}_q(X) $$ 
for $0<j<k$ and any real numbers $\alpha,\,\beta,\,\gamma$.
%本文真正研究的

When
$(X,d,\mu)=
(\mathbb S^{n-1},|\cdot|,d\sigma)$,
the space
$B_q^{k,v}(X)$
reduces to the block spaces on the unit sphere $\mathbb S^{n-1}$ introduced by Jiang and Lu \cite{JiangLu92} in the study of the homogeneous singular integral operators. For further information about the theory of spaces generated by blocks  on the unit sphere $\mathbb S^{n-1}$ for singular integrals and related operators, see
\cite{AlH99,AAP04, ACP11, AP02, CL11,CLW25,JiangLu93,LiY12,LuWu04,LuWu04b, Wu05,YuLu16}
or the survey \cite{Lu07}.

Now, we recall the definition of the Orlicz-type space $L(\log^+\!\! L)^\beta(X)$.

\begin{definition}[Orlicz-type space $L(\log^+\!\!  L)^\beta(X)$]
{\it Let $\beta>0$.
The space
$L(\log^+\!\!  L)^\beta(X)$
consists of all measurable functions $f$ on $X$ satisfying
$$
\|f\|_{L(\log^+\!\!  L)^\beta(X)}<\infty,
$$ where the quasi-norm $\|f\|_{L(\log^+\!\!  L)^\beta(X)}$ is defined by
\[
\|f\|_{L(\log^+\!\!  L)^\beta(X)} :=\int_X|f(x)|\bigl(\log(2+|f(x)|)\bigr)^\beta\,d\mu(x).
\]}
\end{definition}

The space $L(\log^+ \!\!L)^\beta(X)$ belongs to the class of Orlicz spaces introduced by Orlicz in 1932 as a natural extension of the $L^p$ spaces, see, e.g., \cite{Mu83,RaoRen91}. 
Among them, the Zygmund space $L\log^+\!\!L(X)$, corresponding to a logarithmic Young function, emerged naturally in Fourier analysis and differentiation theory, especially in the work of Zygmund on trigonometric series and singular integrals \cite{Zygmund02}. 
Later, the spaces $L(\log^+\!\! L)^\beta(X)$ played an important role in harmonic analysis, particularly in the study of the convergence and growth of partial sums of Fourier series \cite{Car66, Lie13}, as well as in the boundedness of Calder\'on--Zygmund singular integrals with rough kernels \cite{AACP02, CZ56, Chr88, CorF75, Gra14a, Gra14b, Seeger96, T, Wal72}.
More developments in this direction can also be found in the works of Stein and Weiss on singular integrals and real-variable methods (see, e.g., \cite{Stein70,Stein93}).

In 1992, Jiang and Lu \cite{JiangLu92} established the following inclusion relations  on the unit sphere $\mathbb S^{n-1}$.
\begin{theoremA}(\cite{JiangLu92})
{\it Let $1<q\leq \infty, k\geq 0$ and $v>-1$.
Then the following holds:}
\begin{enumerate}
\item $L^q(\mathbb{S}^{n-1})\subseteq B^{k,v}_q(\mathbb{S}^{n-1});$
\item $B^{k,v_2}_q(\mathbb{S}^{n-1})\subseteq B^{k,v_1}_q(\mathbb{S}^{n-1}),  \quad v_2>v_1>-1;$
\item $B^{k_2,v_2}_q(\mathbb{S}^{n-1})\subseteq B^{k_1,v_1}_q(\mathbb{S}^{n-1}), \quad 0\leq k_1<k_2, \, v_2,v_1>-1;$
\item $B^{k,v}_{q_2}(\mathbb{S}^{n-1})\subseteq B^{k,v}_{q_1}(\mathbb{S}^{n-1}),\quad 1<q_1<q_2.$
\end{enumerate}
\end{theoremA}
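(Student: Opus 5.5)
We prove the four inclusions of Theorem A in turn. In each case we take an admissible decomposition for the smaller space and show it is admissible, with a comparable norm, for the larger space. Throughout, $\sigma(\mathbb S^{n-1})<\infty$ and the whole sphere is a ball.

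\emph{Item (1).} Let $f\in L^q(\mathbb S^{n-1})$ with $f\neq 0$, and let $B_0=\mathbb S^{n-1}$, viewed as a ball of radius larger than $2$. Put
\[
a=\frac{f}{\|f\|_{L^q}\,\sigma(B_0)^{1/q'}},\qquad \lambda=\|f\|_{L^q}\,\sigma(B_0)^{1/q'} .
\]
Then $a$ is a $q$-block and $f=\lambda a$ is a one-term decomposition. This gives
\[
\|f\|_{B^{k,v}_q}\le \lambda\bigl(1+\phi_{k,v}(\sigma(B_0))\bigr).
\]
If $\sigma(B_0)\ge 1$ then $\phi_{k,v}(\sigma(B_0))=0$; otherwise it is a finite constant depending only on $n,k,v$. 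Hence $\|f\|_{B^{k,v}_q}\le C\|f\|_{L^q}$.

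\emph{Items (2) and (3).} The blocks are unchanged, so it suffices to prove the pointwise bound
\[
1+\phi_{k_1,v_1}(t)\le C\bigl(1+\phi_{k_2,v_2}(t)\bigr)\qquad\text{for all } t>0.
\]
For $t\ge 1$ both $\phi$'s vanish. For $t\in(0,1)$ we compare the integrands on the range $s\in(t,1)$.
\begin{itemize}
\item For item (2), $k$ is fixed and $v_2>v_1$. On $s\in(e^{-1},1)$ we have $\log(1/s)\le 1$, so $(\log(1/s))^{v_1}$ may exceed $(\log(1/s))^{v_2}$. However, $\int_{e^{-1}}^1 s^{-1-k}(\log (1/s))^{v_1}\,ds$ is a finite constant because $v_1>-1$. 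On $s<e^{-1}$ we have $(\log(1/s))^{v_1}\le(\log(1/s))^{v_2}$. Together these give $\phi_{k,v_1}\le C+\phi_{k,v_2}$.
\item For item (3), $k_1<k_2$. Choose $\delta\in(0,1)$ small, depending on $k_2-k_1$ and $v_1,v_2$, such that
\[
s^{-1-k_1}(\log (1/s))^{v_1}\le s^{-1-k_2}(\log(1/s))^{v_2}\qquad\text{for } s<\delta.
\]
This is possible because $s^{k_2-k_1}(\log(1/s))^{v_1-v_2}\to0$ as $s\to0$. On $[\delta,1)$ the $\phi_{k_1,v_1}$ integrand is integrable near $s=1$ since $v_1>-1$, so that portion contributes a constant.
\end{itemize}
With the pointwise bound in hand, the norms compare directly, since the infimum over the same decompositions is taken.

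\emph{Item (4).} Let $1<q_1<q_2$ and let $a$ be a $q_2$-block on a ball $B$. By H\"older's inequality,
\[
\|a\|_{L^{q_1}(B)}\le \sigma(B)^{1/q_1-1/q_2}\|a\|_{L^{q_2}(B)}\le \sigma(B)^{1/q_1-1/q_2-1/q_2'}=\sigma(B)^{-1/q_1'}.
\]
So $a$ is also a $q_1$-block on the same ball, and the inclusion holds with norm at most $1$.

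\emph{Where the work lies.} The only step needing care is the behaviour near $s=1$ in items (2) and (3), where $(\log(1/s))^{v}$ with $v$ negative is singular. This is handled by the integrability condition $v>-1$, which bounds that contribution by a constant. We also note that $f\in L^1$ is automatic in each case, because each block has $\|a\|_{L^1}\le \sigma(B)^{1/q'}\|a\|_{L^q}\le 1$.
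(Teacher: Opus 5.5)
The paper does not prove Theorem A; it is quoted from Jiang and Lu, so there is no proof in the paper to compare against. Your argument is correct as written.

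Items (2) and (3) rightly reduce to a pointwise comparison $1+\phi_{k_1,v_1}(t)\le C\bigl(1+\phi_{k_2,v_2}(t)\bigr)$, taken over the same decompositions. The only delicate region, $s$ near $1$, is handled correctly by the integrability of $(\log(1/s))^{v_1}$ for $v_1>-1$.

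Where the paper does touch these inclusions, it uses the same devices you do. Your item (4) is exactly the H\"older observation the paper invokes to obtain \eqref{Bbh}. Your item (1) is the same \emph{whole space is a ball} one-term decomposition used in Case 1 of the proof of Theorem \ref{B1case}.

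One small caveat concerns item (1). Some formulations of the sphere block spaces restrict caps to radius at most $1$. Under that convention the whole sphere is not an admissible support, so you would split $f$ over a finite cover of $\mathbb S^{n-1}$ by such caps. This only changes the constant.
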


In 1993, Keitoku and Sato \cite{Kei93} further clarified the relationship between $B^{k,v}_q(\mathbb{S}^{n-1})$ and $L^p(\mathbb{S}^{n-1})$.
\begin{theoremA}(\cite{Kei93})\label{kei}
{\it Let $1<q\leq \infty, k\geq 0$ and $v>-1$. 
Then the following holds:}
\begin{enumerate}

\item $B^{k,v}_q(\mathbb{S}^{n-1})\subseteq L^p(\mathbb{S}^{n-1}),\quad  1<p\leq q,\, k>1/p';$
\item $B^{k,v}_q(\mathbb{S}^{n-1})= L^q(\mathbb{S}^{n-1}),\quad  k>1/q'\,\text{~or~}\, k=1/q'\, \text{~ \it and~}\, v\geq 0;$
\item $\cup_{p>1} L^p(\mathbb{S}^{n-1}) \subseteq B^{0,0}_q(\mathbb{S}^{n-1}),\quad q>1.$

\end{enumerate}
\end{theoremA}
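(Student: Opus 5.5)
The plan is to prove the three assertions of Theorem B separately. All of them rest on two facts: Hölder's inequality on a single block, and the asymptotics of $\phi_{k,v}(t)$ as $t\to0^+$. Write $|B|=\sigma(B)$. The whole sphere $\mathbb S^{n-1}$ is a ball of finite measure, so every ball has measure at most $\sigma(\mathbb S^{n-1})$, and only small $t$ matters in $\phi_{k,v}(t)$.

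\emph{Part (1).} Let $a$ be a $q$-block supported on $B$ and let $1<p\le q$. By Hölder,
\[
\|a\|_{L^p}\le |B|^{1/p-1/q}\|a\|_{L^q}\le |B|^{1/p-1/q-1/q'}=|B|^{-1/p'}.
\]
For $k>0$ and $t<1/e$ one has $\phi_{k,v}(t)\ge c_{k,v}\,t^{-k}(\log(1/t))^{v}-C_{k,v}$. Since $k>1/p'$ strictly, the power gain $t^{-(k-1/p')}$ dominates $(\log(1/t))^{-v}$ even when $v<0$. Hence $t^{-1/p'}\le C\bigl(1+\phi_{k,v}(t)\bigr)$ for all $0<t\le\sigma(\mathbb S^{n-1})$, since the range $t\ge 1/e$ is trivial. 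Given any decomposition $f=\sum_j\lambda_j a_j$, Minkowski's inequality gives
\[
\|f\|_{L^p}\le C\sum_j|\lambda_j|\bigl(1+\phi_{k,v}(|B_j|)\bigr).
\]
Taking the infimum over decompositions yields $\|f\|_{L^p}\le C\|f\|_{B^{k,v}_q}$.

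\emph{Part (2).} The inclusion $L^q\subseteq B^{k,v}_q$ is Theorem A(1). For the reverse inclusion with $k>1/q'$, apply Part (1) with $p=q$. For the endpoint $k=1/q'$ with $v\ge0$, use $(\log(1/s))^v\ge1$ on $(0,1/e)$ to get
\[
\phi_{k,v}(t)\ge\int_t^{1/e}s^{-1-k}\,ds=k^{-1}\bigl(t^{-k}-e^{k}\bigr).
\]
Therefore $1+\phi_{k,v}(t)\ge c\,t^{-1/q'}$ on the whole range, and the Minkowski argument of Part (1) applies verbatim.

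\emph{Part (3).} Let $f\in L^p$ with $p>1$. If $p\ge q$ then $f\in L^q\subseteq B^{0,0}_q$ by Theorem A(1), so we may assume $1<p<q$. We use a system of dyadic cubes on $\mathbb S^{n-1}$ (Christ cubes): each cube $Q$ of generation $\ell$ has $|Q|\approx 2^{-\ell}$ and sits inside a ball $B_Q$ with $|B_Q|\approx|Q|$. Here $\phi_{0,0}(t)=\log(1/t)$, so a block on $B_Q$ costs about $1+\ell$.

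First write $f=f\chi_{\{|f|\le1\}}+\sum_{m\ge0}f\chi_{E_m}$, where $E_m=\{2^m<|f|\le2^{m+1}\}$. The first term is a bounded multiple of a $q$-block on the whole sphere. For each $E_m$, run a Lebesgue-density stopping time: take the maximal dyadic cubes $Q$ with $|E_m\cap Q|>\tfrac12|Q|$. These cubes are disjoint, cover $E_m$ up to a null set, and satisfy $\sum_Q|Q|\le2|E_m|$. Put $a_{m,Q}=f\chi_{E_m\cap Q}/\lambda_{m,Q}$ with
\[
\lambda_{m,Q}=\|f\chi_{E_m\cap Q}\|_{L^q}|B_Q|^{1/q'}\lesssim 2^m|Q|.
\]
The required bound then reduces to
\[
\sum_m 2^m\sum_Q|Q|\bigl(1+\log(1/|Q|)\bigr)\lesssim \|f\|_{L^p}^p+1 .
\]

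\emph{Main obstacle.} The real difficulty is that $\sum_Q|Q|\log(1/|Q|)$ is not controlled by $|E_m|\log(1/|E_m|)$, because the stopping cubes can be very small. I would first try to spend the room between $p$ and $1$ to pay for the logarithm. Concretely, I would reorganise the decomposition across all levels $m$ at once: for each cube $Q$, group together the levels $m$ with $2^{-mp}\lesssim|Q|$ and bound the logarithmic weight by $\log(1/|Q|)\lesssim m$. This reduces matters to $\sum_m m\,2^m|E_m|$, which is finite because
\[
\sum_m m\,2^m|E_m|\lesssim\int|f|\log(2+|f|)\,d\sigma\lesssim 1+\|f\|_{L^p}^p .
\]
This last bound is exactly where $p>1$ is used. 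The remaining regime, cubes with $|Q|<2^{-mp}$, is handled by Chebyshev's inequality, $|E_m|\le 2^{-mp}\|f\|_{L^p}^p$, which forces such cubes to carry little total mass. Making this regrouping compatible with the disjointness needed for the $L^q$ normalisation of the blocks is the step I expect to require the most care.
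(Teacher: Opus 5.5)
Your arguments for (1), (2), and for the case $p\ge q$ of (3) are correct. Hölder gives $\|a\|_{L^p}\le|B|^{-1/p'}$ for a $q$-block. Your lower bounds on $\phi_{k,v}$ give $t^{-1/p'}\lesssim 1+\phi_{k,v}(t)$ on $(0,\sigma(\mathbb S^{n-1})]$ under the stated hypotheses, and Minkowski finishes. The paper gives no proof of this theorem; it only quotes \cite{Kei93}.

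The gap is the case $1<p<q$ of (3), at the step where you dismiss the cubes with $|Q|<2^{-mp}$ by Chebyshev. Their cost is $2^m\sum_Q|Q|\,(1+\log(1/|Q|))$. Chebyshev bounds $\sum_Q|Q|\le 2|E_m|$, but nothing in $\|f\|_{L^p}$ bounds $\log(1/|Q|)$. The stopping cubes record how finely $E_m$ is fragmented, and no rearrangement-invariant norm sees that: if $E_m$ is a dust of tiny caps, the maximal cubes are as small as the caps. No regrouping across levels can repair this.

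In fact, for $1<p<q\le\infty$ the inclusion $L^p(\mathbb S^{n-1})\subseteq B^{0,0}_q(\mathbb S^{n-1})$ is false. So item (3) as written fails for every fixed $q>1$. What does hold is $L^p\subseteq L^q\subseteq B^{0,0}_q$ for $p\ge q$, hence $\bigcup_{p>1}L^p\subseteq\bigcup_{q>1}B^{0,0}_q$.

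To see the failure, argue as in the paper's proof of \eqref{example}, via Lemma \ref{jz} with $v=0$. Let $c_m=2^{m/p'}$ and $\ell_m=e^{-c_m}$. Let $A_m$ be a union of caps of measure $\ell_m$, at most one in each cell of a dyadic partition of $\mathbb S^{n-1}$ into cells of measure $\approx 2^m\ell_m$. Arrange $\sigma(A_m)\approx 2^{-m}$ and the $A_m$ pairwise disjoint. Then $\sigma(B\cap A_m)\lesssim\min\{\rho,\,2^{-m}\rho+\ell_m\}$ for every cap $B$ with $\sigma(B)=\rho$.

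Put $g=\sum_m c_m\chi_{A_m}$. Caps with $\rho\ge1$ are trivial, since $g\in L^{q'}$. For $\rho<1$ and $L=\log(1/\rho)$, split according to $c_m\le L$ or $c_m>L$:
\[
\int_B g^{q'}\,d\sigma\lesssim \rho\sum_{c_m\le L}c_m^{q'}+\rho\sum_m c_m^{q'}2^{-m}+\sum_{c_m>L}c_m^{q'}e^{-c_m}\lesssim \rho\,(1+L)^{q'}.
\]
This uses the geometric growth of $c_m$, the condition $q'<p'$, and the super-geometric decay of $c_m^{q'}e^{-c_m}$ (read $q'=1$ if $q=\infty$). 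Thus $g$ satisfies \eqref{jzy}.

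Now take $f=\sum_m m^{-1}2^{m/p}\chi_{A_m}$. Then $\|f\|_{L^p}^p\approx\sum_m m^{-p}<\infty$, while $\int fg\,d\sigma\approx\sum_m m^{-1}=\infty$. By Lemma \ref{jz}, $f\in L^p\setminus B^{0,0}_q$.
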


Moreover,  the authors of \cite{Kei93} also pointed  out that
 for every $\epsilon>0$,
\begin{equation}\label{KS}
B_q^{0,v}(\mathbb S^{n-1})
\nsubseteq
L(\log^+\!\!L)^{1+v+\epsilon}(\mathbb S^{n-1}),
\qquad 1<q\leq \infty, v>-1.
\end{equation} Especially,
\begin{equation}\label{KS1}
B_q^{0,0}(\mathbb S^{n-1})
\nsubseteq
L(\log^+\!\!L)^{1+\epsilon}(\mathbb S^{n-1}),
\qquad 1<q\leq \infty.
\end{equation}
By (3) in Theorem \ref{kei} and \eqref{KS1}, it is easy to see that the block space $B^{0,0}_q(\mathbb{S}^{n-1})$ is an interesting space, which is quite close to $L^1(\mathbb{S}^{n-1})$.

A significant advance was later obtained by Ye and Zhu \cite{YZ06}, who proved that
\begin{equation}\label{YZ}
B_q^{0,v}(\mathbb S^{n-1}) \subseteq H^1(\mathbb S^{n-1}) + L(\log^+\!\!L)^{1+v}(\mathbb S^{n-1}),\qquad 1<q\leq \infty, v>-1,
\end{equation}
where $H^1(\mathbb S^{n-1})$ is the Hardy space on the unit sphere  $\mathbb S^{n-1}$ (see e.g., \cite{CW77, Col82}).
In particular, every function in $ L(\log^+\!\!L)(\mathbb S^{n-1})$
with vanishing integral belongs to $H^1(\mathbb S^{n-1})$.

The results of \eqref{KS} and \eqref{YZ} suggest that 
$B_q^{0,v}(\mathbb S^{n-1})$
 and 
$L(\log^+\!\!L)^{1+v}(\mathbb S^{n-1})$ seem to have some kind of relationship.
Nevertheless, even in the simplest case $v=0$,
the precise relationship between $B_q^{0,0}(\mathbb S^{n-1})$ and $L\log^+\!L(\mathbb S^{n-1})$  remains open for a long time.
In particular, Lu explicitly raised this problem in his survey article \cite{Lu07} (see also \cite{AlH99,LuWu04} for earlier observations).

\begin{Pro}\label{pro} {\it Let $1<q\leq \infty,\,\, v>-1.$
 Whether $$B_q^{0,v}(\mathbb S^{n-1})=L(\log^+\!\!L)^{1+v}(\mathbb S^{n-1}),$$ or $$B_q^{0,v}(\mathbb S^{n-1})\subsetneq L(\log^+\!\!L)^{1+v}(\mathbb S^{n-1}),$$  or $$ L(\log^+\!\!L)^{1+v}(\mathbb S^{n-1})\subsetneq  B_q^{0,v}(\mathbb S^{n-1})?$$}
\end{Pro}

%问题重要性
The solution to Problem \ref{pro} is crucial for understanding the  significance and the role of block spaces  on the unit sphere $\mathbb S^{n-1}$. Without a precise description of their relationships with Orlicz-type spaces, it remains unclear whether results established in block spaces  genuinely go beyond the $L(\log^+\!\!L)^{1+v}$ framework.

%This problem is very important in the development of block spaces. 
%Indeed, in the absence of a precise understanding of the relationship between block and Orlicz-type spaces, it remains unclear whether the boundedness results in block spaces genuinely extend beyond the  $L(\log^+\!\!L)^{1+v}$ scale, or merely reproduce known results in an equivalent form.

\subsection{Main results}

%To state our result, we recall the definition of a space of homogeneous type introduced by Coifman and Weiss \cite{CW71}.
%A nonzero measure $\mu$ is said to satisfy the \emph{doubling condition} if there exists a constant $C_{\mu}>0$, depending only on $\mu$, such that
%\[
%\mu(B(x,2r)) \le C_{\mu}\, \mu(B(x,r))
%\quad \text{for all } x\in X,\ r>0.
%\]
%
%We call $(X,d,\mu)$ a \emph{space of homogeneous type} if $(X,d,\mu)$ is a quasi-metric measure space and $\mu$ satisfies the doubling condition.
%We further assume that
%there exist constants $c,s>0$ such that
%\begin{equation}\label{upc}
%\mu(B(x,r))\le c r^s
%\end{equation}
%for all balls $B(x,r)$.
%This polynomial upper growth condition is weaker than Ahlfors regularity and is satisfied in many standard settings.
%In particular, it implies that balls whose measures are bounded below must have radius bounded below.%问题→洞察→结果→新问题→最终结论

Our first main result is stated as follows.
\begin{theorem}\label{thmfan}
Let $(X,d,\mu)$ be an $s$-Ahlfors-regular quasi-metric measure space.
Then
\begin{equation}\label{t1}
B^{0,v}_q(X) \subsetneq L(\log^+\!\!L)^{1+v}(X),
\end{equation}
for all $1<q\le \infty$ and $v>-1$.
\end{theorem}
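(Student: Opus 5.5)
Two things must be shown: the inclusion $B^{0,v}_q(X)\subseteq L(\log^+\!\!L)^{1+v}(X)$ together with a norm bound, and a function in $L(\log^+\!\!L)^{1+v}(X)$ that lies outside $B^{0,v}_q(X)$. Throughout write $\Phi(t)=t(\log(2+t))^{1+v}$. Note that $\phi_{0,v}(t)=\frac{1}{1+v}(\log(1/t))^{1+v}$ for $0<t<1$ and $\phi_{0,v}(t)=0$ for $t\ge 1$. Hence the block weight is $1+\phi_{0,v}(\mu(B))\approx(\log(2+1/\mu(B)))^{1+v}$.

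\textbf{Single block.} Let $a$ be a $q$-block supported on $B$. It suffices to take $q<\infty$, since an $\infty$-block is also a $q$-block. By H\"older, $\|a\|_{L^1}\le1$. Split $B$ into the set where $|a|\le \mu(B)^{-2}$ (or $|a|\le e^2$ when $\mu(B)\ge1$) and its complement $E$. On the first set, $\log(2+|a|)\lesssim \log(2+1/\mu(B))$, so that part contributes at most $C(1+\phi_{0,v}(\mu(B)))$. On $E$, use $(\log(2+t))^{1+v}\le C_\epsilon t^{\epsilon}$ with $\epsilon=(q-1)/2$ and Chebyshev:
\[
\int_E|a|^{1+\epsilon}\le \int_E |a|^q\,|a|^{-(q-1-\epsilon)}\le \mu(B)^{1-q}\,\mu(B)^{2(q-1-\epsilon)}=\mu(B)^{q-1-2\epsilon}=1
\]
(adjusting $\epsilon$ and thresholds for $\mu(B)\ge1$). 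This gives $\int\Phi(|a|)\le C(1+\phi_{0,v}(\mu(B)))$.

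\textbf{Summing blocks.} $\Phi$ is not homogeneous, so the single-block estimate does not sum directly. Use that $\Phi$ is convex for $v>-1$ and large $t$; replace it by an equivalent convex Young function if needed. Then $\int\Phi(|\lambda a|)\lesssim |\lambda|\int\Phi(|a|)$ for $|\lambda|\le1$. Renormalize $f=\sum\lambda_ja_j$ so that $\Lambda=\sum|\lambda_j|(1+\phi(\mu(B_j)))\le1$, and apply Jensen with weights $|\lambda_j|/\sum|\lambda_i|$:
\[
\int\Phi(|f|)\le \sum_j \frac{|\lambda_j|}{\sum_i|\lambda_i|}\int\Phi\Big(\textstyle\sum_i|\lambda_i|\,|a_j|\Big)\lesssim \sum_j|\lambda_j|\int\Phi(|a_j|)\lesssim\Lambda .
\]
For general $f$ this yields finiteness, since $\Phi(ct)\le C_c\Phi(t)$ by the $\Delta_2$ property. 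So the inclusion holds.

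\textbf{Properness (main obstacle).} Every block is supported on a ball, and by Ahlfors regularity balls have radius comparable to $\mu(B)^{1/s}$. The counterexample should therefore be supported on a set that is measure-small but geometrically spread, so that covering it by balls costs much more. Fix a ball $B_0$ of measure about $1$. For each $k$, use the upper and lower bounds in \eqref{upc} and a maximal separated net to choose $N_k\approx 2^{k}$ disjoint tiny balls $B_{k,i}$ inside $B_0$ with $\mu(B_{k,i})=\delta_k$ extremely small, arranged so that distinct points lie at mutual distance $\gtrsim N_k^{-1/s}$. Set $f_k=\sum_i \delta_k^{-1}N_k^{-1}\chi_{B_{k,i}}$, so $\|f_k\|_{L^1}=1$ and $\int\Phi(f_k)\approx(\log(1/(\delta_kN_k)))^{1+v}$. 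Any block decomposition must either use small balls, each costing $(\log 1/\delta_k)^{1+v}$, or larger balls $B$ on which the block has $L^q$ mass at most $\mu(B)^{-1/q'}$. The latter forces the captured $L^1$ mass to be at most $(\mu(B\cap \operatorname{supp}f_k)/\mu(B))^{1/q'}$, which is small unless $B$ is tiny. Make this quantitative to obtain $\|f_k\|_{B^{0,v}_q}\gtrsim(\log(1/\delta_k))^{1+v}$. Choosing $\delta_k$ with $\log(1/(\delta_kN_k))\ll\log(1/\delta_k)$ is not possible with a single scale, so instead layer many scales as a Cantor-type set with geometrically growing counts. Then the Orlicz norm stays bounded while the block norm diverges.

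This lower bound for the block norm, which requires controlling arbitrary countable decompositions by a duality or covering argument, is the hardest step. The first thing to try is a dual functional $g$ with $\sup_{\text{blocks}}|\int ga|/(1+\phi(\mu(B)))\le1$, built from $g\approx\sum_k(\log 1/\delta_k)^{1+v}\chi_{\operatorname{supp}f_k}$ adjusted by a Morrey-type averaging condition. Finally, glue $\sum_k c_kf_k$ with $\sum c_k<\infty$ and $\sum c_k\|f_k\|_{B}=\infty$ via a closed-graph or Baire argument: if $L(\log^+\!\!L)^{1+v}\subseteq B^{0,v}_q$, then the norms would be comparable on the normalized $f_k$, which the construction contradicts.
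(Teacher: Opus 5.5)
Your inclusion half is sound and differs from the paper only in bookkeeping. Truncating at height $\mu(B)^{-2}$ and applying Chebyshev replaces the paper's rescaling by $\gamma_j=\mu(B_j)$, and the convexity/Jensen summation is the same. In fact $t\mapsto t(\log(2+t))^{1+v}$ is convex on all of $[0,\infty)$ for every $v>-1$, so no substitute Young function is needed.

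\textbf{The gap is in the properness half, which you only outline.} The decisive lower bound $\|f_k\|_{B^{0,v}_q(X)}\gtrsim(\log(1/\delta_k))^{1+v}$ is never proved, and as stated it cannot hold in general. Since $f_k$ is supported in the ball $B_0$ with $\|f_k\|_{L^q}=(N_k\delta_k)^{-1/q'}$, a single block on $B_0$ already gives $\|f_k\|_{B^{0,v}_q(X)}\lesssim(N_k\delta_k)^{-1/q'}$, which can be far smaller. Your ``captured mass'' heuristic also ignores cancellation among overlapping blocks. What controls arbitrary decompositions is the duality you list as ``the first thing to try'', namely Lemma~\ref{jz}: if $\bigl(\mu(B)^{-1}\int_B|g|^{q'}\,d\mu\bigr)^{1/q'}\le C_g\bigl(1+\phi_{0,v}(\mu(B))\bigr)$ for every ball $B$, then $\|fg\|_{L^1(X)}\le C_g\|f\|_{B^{0,v}_q(X)}$. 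Test your $f_k$ against $g=G\chi_{S_k}$ with $S_k=\bigcup_i B_{k,i}$. The ball condition permits essentially $G\approx\min\{(\log(1/\delta_k))^{1+v},(N_k\delta_k)^{-1/q'}\}$, the binding balls being the $B_{k,i}$ themselves and $B_0$. That minimum is the lower bound you can actually obtain. Contrary to your remark, a single scale then suffices: $N_k=2^k$ and $\delta_k=2^{-k}k^{-C}$ with $C>(1+v)q'$ give block norm $\gtrsim k^{1+v}$ against Orlicz size $\approx(\log k)^{1+v}$. The Cantor-type layering is unnecessary, and as written it is unspecified.

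\textbf{The gluing step is also incorrect as written.} Since $\|\cdot\|_{B^{0,v}_q(X)}$ is subadditive, $\sum_k c_k\|f_k\|_{B^{0,v}_q(X)}=\infty$ says nothing about $\sum_k c_kf_k$. Moreover, $\sum_k c_k<\infty$ does not place $\sum_k c_kf_k$ in $L(\log^+\!\!L)^{1+v}(X)$, because the modular is not homogeneous and the $f_k$ are unbounded in it. The closed-graph route is legitimate, but only after two repairs. First, replace the modular by the Luxemburg norm; both spaces then embed continuously into $L^1(X)$, so the graph of the inclusion is closed. Second, prove $\|f_k\|_{B^{0,v}_q(X)}/\|f_k\|_{\mathrm{Lux}}\to\infty$ through the duality above.

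The paper avoids gluing altogether by using one function and one test function. It takes pairwise disjoint sets $E_m$ with $\mu(E_m)=2^{-2m-2}$ and $\mu(B\cap E_m)\le C_0 2^{-m}\mu(B)$ whenever $\mu(B)\ge 2^{-m^2}$ (Lemmas~\ref{Gk} and~\ref{lemEm}, built from Christ cubes). It then sets $f=\sum_m \lambda_m\mu(E_m)^{-1}\chi_{E_m}$ with $\lambda_m=m^{-2-v}(\log m)^{-2}$, and $g=\sum_m m^{2(1+v)}\chi_{E_m}$. The point is that $\log(1/\mu(E_m))\approx m$, while the spreading persists down to balls of measure $2^{-m^2}$. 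This makes the weight $m^{2(1+v)}$ admissible in $g$: for smaller balls $1+\phi_{0,v}(\mu(B))\gtrsim m^{2(1+v)}$ already, and for larger balls the density $2^{-m}$ absorbs it. Meanwhile the Orlicz modular of $f$ only sees $m^{1+v}$. Hence $f\in L(\log^+\!\!L)^{1+v}(X)$ but $\int_X fg\,d\mu=\sum_m m^{v}(\log m)^{-2}=\infty$.
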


%\begin{remark}
%{\it The nonatomic assumption guarantees that the Christ dyadic cubes can be chosen to have arbitrarily small measure (see \eqref{nonatomic}).
%Condition \eqref{upc} is satisfied by all Ahlfors-regular spaces,
%including $\mathbb R^n$, the unit sphere $\mathbb S^{n-1}$,
%and homogeneous groups.}
%%The assumption \eqref{upc} ensures that balls with measure bounded below also have a positive lower bound on their radius.
%\end{remark}

%\begin{remark}
%For a fixed $v$, the inclusion in \eqref{cor} allows one to transfer
%boundedness results for singular integral operators with kernels in
%$L(\log^+\!\!L)^{1+v}(\mathbb{S}^{n-1})$
%to the setting of kernels in $B^{0,v}_q(\mathbb{S}^{n-1})$.
%\end{remark}

%Theorem \ref{thmfan} establishes the inclusion
%$
%B^{0,v}_{q}(X)\subset L(\log^+L)^{1+v}(X).
%$

Theorem \ref{thmfan} demonstrates that the space $B^{0,v}_{q}(X)$ is a proper subspace of $L(\log^+\!\! L)^{1+v}(X)$.
A natural question is whether the space  $L(\log^+\!\! L)^{1+v}(X)$ appearing in \eqref{t1} can be replaced by a slightly smaller Orlicz-type space. The following theorem shows that this is impossible.

Let $v>-1$. For an increasing function $\Psi:[1,\infty)\to(0,\infty)$, define
\[
L(\log^+L)^{1+v}\Psi(L)(X)
=
\Big\{
f\in L^1(X):\,
\int_X |f(x)|
\bigl(\log(2+|f(x)|)\bigr)^{1+v}
\Psi(|f(x)|)\,d\mu(x)
<\infty
\Big\}.
\]

\begin{theorem}\label{3log}
Let $(X,d,\mu)$ be an $s$-Ahlfors-regular quasi-metric measure space.
Assume that $\Psi:[1, \infty)\to (0,\infty)$ is increasing and satisfies
$\lim_{t\to\infty}\Psi(t)=\infty$.
Then
\[
B^{0,v}_{q}(X)
\nsubseteq
L(\log^+\!\!  L)^{1+v}\Psi( L)(X),
\]
 for all $1<q\leq \infty$ and $v>-1$.
\end{theorem}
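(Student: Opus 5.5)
We will construct an explicit function $f=\sum_j \lambda_j a_j$ with $\sum_j|\lambda_j|(1+\phi_{0,v}(\mu(B_j)))<\infty$ for which $\int_X |f|(\log(2+|f|))^{1+v}\Psi(|f|)\,d\mu=\infty$. The building blocks are $L^\infty$-normalized blocks $a_j=\mu(B_j)^{-1}\chi_{B_j}$. Such a block is a $q$-block for every $1<q\le\infty$, since $\|a_j\|_{L^q}=\mu(B_j)^{-1/q'}$. The balls $B_j$ will be pairwise disjoint. Then $|f|=|\lambda_j|\mu(B_j)^{-1}$ on $B_j$, and the two quantities to compare are
\[
\sum_j |\lambda_j|\bigl(1+\phi_{0,v}(\mu(B_j))\bigr)
\quad\text{and}\quad
\sum_j |\lambda_j|\,\bigl(\log(2+|\lambda_j|/\mu(B_j))\bigr)^{1+v}\,\Psi\bigl(|\lambda_j|/\mu(B_j)\bigr).
\]
For small $t$ we have $\phi_{0,v}(t)=\int_t^1 s^{-1}(\log(1/s))^v\,ds=(1+v)^{-1}(\log(1/t))^{1+v}$, using $v>-1$. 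So for $r_j:=\mu(B_j)$ small and $\lambda_j\le 1$ with $\lambda_j\ge r_j^{1/2}$, say, the $j$-th terms are comparable to $\lambda_j(\log(1/r_j))^{1+v}$ and $\lambda_j(\log(1/r_j))^{1+v}\Psi(\lambda_j/r_j)$ respectively.

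The choice of parameters is as follows. Since $\Psi\to\infty$, we pick $r_j\downarrow0$ so fast that $\Psi(r_j^{-1/2})\ge 2^{j}$. We then set $\lambda_j:=2^{-j}(\log(1/r_j))^{-(1+v)}$, arranging $r_j$ small enough that $\lambda_j\ge r_j^{1/2}$, hence $\lambda_j/r_j\ge r_j^{-1/2}$. Then the block norm is bounded by a constant times $\sum_j 2^{-j}<\infty$. The Orlicz-type integral is at least $c\sum_j 2^{-j}\Psi(r_j^{-1/2})\ge c\sum_j 1=\infty$, using monotonicity of $\Psi$ and the fact that $\log(2+\lambda_j/r_j)\ge\tfrac12\log(1/r_j)$. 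We also note $f\in L^1$, since $\sum_j\lambda_j<\infty$, which the definition of $L(\log^+L)^{1+v}\Psi(L)(X)$ requires; the failure is then in the weighted integral. Because the $B_j$ are disjoint, there is no cancellation issue and $|f|$ is computed exactly.

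The main step needing care is the geometric one: producing infinitely many pairwise disjoint balls in $X$ with prescribed small measures $\mu(B_j)\approx r_j$, with two-sided comparability (constants absorbed). We use Ahlfors regularity \eqref{upc}. We fix a point $x_0$ and note that $X$ contains points at every distance scale below $\operatorname{diam}(X)$: the estimate $\mu(B(x_0,\rho))\le C\rho^s$ combined with $\mu(B(x_0,R))\ge cR^s$ forces the existence of points $y$ with $d(x_0,y)\in[\rho,R)$ once $R/\rho$ is a large constant. Choosing a lacunary sequence of distances $\rho_j\downarrow0$ with $\rho_{j+1}\le\rho_j/K$ for a large $K$ depending on $A$, we obtain points $x_j$ with $d(x_0,x_j)\approx\rho_j$. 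We take balls $B_j=B(x_j,\epsilon\rho_j)$ with $\epsilon$ small depending on $A$; the quasi-triangle inequality then gives disjointness. Their measures are $\approx\rho_j^s$ by \eqref{upc}, and since the $r_j$ may be chosen freely, we simply choose $\rho_j$ sparse enough to meet the growth requirements on $\Psi$ above. Only the comparability $\mu(B_j)\approx r_j$ enters, and it affects the estimates by constant factors only, with logarithms changing by bounded amounts. If $\operatorname{diam}(X)$ is finite, we only use radii below it, so nothing changes.
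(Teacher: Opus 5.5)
Your proposal is correct and follows essentially the same route as the paper: pairwise disjoint balls of rapidly shrinking measure, $L^\infty$-normalized blocks $\chi_{B_j}/\mu(B_j)$, and coefficients equal to a summable sequence divided by $(\log 1/\mu(B_j))^{1+v}$, with the measures chosen small enough that $\Psi$ at the block height beats the reciprocal of that summable sequence. The only differences are cosmetic: the paper reduces to $q=\infty$ via $B^{0,v}_\infty\subseteq B^{0,v}_q$ and uses $1/j^2$ against $\Psi\ge j$ where you use $2^{-j}$ against $\Psi\ge 2^j$; your explicit disjoint-ball construction (via Ahlfors regularity and the quasi-triangle inequality) and your condition $\lambda_j\ge r_j^{1/2}$, which gives $\log(2+\lambda_j/r_j)\gtrsim\log(1/r_j)$, supply details that the paper only asserts.
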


\begin{remark}
{\it Together, Theorems
\ref{thmfan}
and \ref{3log} show that $L(\log^+\!\! L)^{1+v}(X)$ is  the smallest Orlicz-type space containing $B^{0,v}_{q}(X)$ for all $1<q\leq \infty$ and $v>-1$. Consequently, the inclusion relationship in \eqref{t1} is optimal. }
\end{remark}

\begin{remark}
{\it When $X=\mathbb{S}^{n-1}$, Theorems \ref{thmfan} and  \ref{3log} yield the sharp inclusion relationship:
\begin{equation}\label{cor1}
B^{0,v}_{q}(\mathbb{S}^{n-1})\subsetneq L(\log^+\!\!L)^{1+v}(\mathbb{S}^{n-1})
\end{equation} and \begin{equation}\label{cor2}
B^{0,v}_{q}(\mathbb{S}^{n-1})
\nsubseteq
L(\log^+\!\! L)^{1+v}\Psi( L)(\mathbb{S}^{n-1}),
\end{equation}
thereby giving a confirmed  answer to Problem \ref{pro} and showing that $L(\log^+\!\!L)^{1+v}(\mathbb{S}^{n-1})$ is optimal among all Orlicz-type spaces containing $B^{0,v}_{q}(\mathbb{S}^{n-1})$.}
\end{remark}

%\begin{remark}
%{\it It should note that 
%the nonatomic assumption in Theorem \eqref{3log} cannot be removed. Indeed, if $X$ consists of finitely many atoms, then every measurable function on $X$ is in $L^{\infty}(X)$. Consequently,
%\[
%L(\log^+\!\! L)^{1+v}\Psi( L)(X)=L^\infty(X)=B^{0,v}_{q}(X).
%\]
%Hence the strict noninclusion in Theorem \eqref{3log} fails in this case. }
%\end{remark}

 However, if we remove 
the geometric  restriction requiring each block to be supported on a ball and instead introduce a generalized block space
$\mathscr B_q^{0,v}(X)$ that  depends only on the measure structure,  it might coincide exactly with Orlicz-type spaces $ L(\log^+\!\!L)^{1+v}(X)$.

Let $1<q\le\infty$.
A measurable function $a$ on $X$ is called a \emph{generalized $q$-block} if there exists a measurable set
$E\subset X$
with
$
0<\mu(E)<\infty
$
such that
$$
\supp a\subseteq E, \quad
\|a\|_{L^q(E)}
\le
\mu(E)^{1/q-1}.
$$

\begin{definition}[Generalized block space $\mathscr{B}^{k,v}_q(X)$]
{\it Let $1<q\le \infty$, $k\ge0$, and $v>-1$.
The space $\mathscr{B}^{k,v}_q(X)$ consists of all functions
$f\in L^1(X)$ admitting a decomposition
\[
f=\sum_{j=1}^\infty \lambda_j a_j,
\]
where $\lambda_j\in\mathbb C$ and each $a_j$ is a generalized $q$-block supported on a measurable set
$E_j\subset X$ with
\[
\sum_{j=1}^\infty |\lambda_j| \Bigl( 1+\phi_{k,v}\big(\mu(E_j)\big) \Bigr) <\infty.
\]}
\end{definition}

The space $\mathscr{B}^{k,v}_q(X)$ is equipped with the norm
\[
\|f\|_{\mathscr{B}^{k,v}_q(X)} := \inf \Big\{ \sum_{j=1}^\infty |\lambda_j| \Bigl( 1+\phi_{k,v}\big(\mu(E_j)\big) \Bigr) \Big\},
\]
where the infimum is taken over all the decompositions
$
f=\sum_{j=1}^\infty \lambda_j a_j
$
such that each $a_j$ is a generalized $q$-block supported on a measurable set
$E_j$.
It follows that $\mathscr{B}^{k,v}_q(X)$
is a Banach space.

The essential difference between
\(B^{k,v}_q(X)\)
and
\(\mathscr{B}^{k,v}_q(X)\)
is that the former requires each block to be supported on a ball, while the latter allows the supports of blocks to be arbitrary finite measurable sets in $X$.
Observe that every $q$-block supported on a ball is also a
generalized $q$-block.
Then we  immediately obtain
\begin{equation}\label{contain}
B^{k,v}_q(X)
\subseteq
\mathscr{B}_q^{k,v}(X).
\end{equation}

Now, the relationship between $\mathscr{B}^{0,v}_q(X) $ and $ L(\log^+\!\!L)^{1+v}(X)$ can be   formulated as follows.

\begin{theorem}\label{thm}
Let $(X,d,\mu)$ be an $s$-Ahlfors-regular quasi-metric measure space, and let
$1<q\le \infty$ and $v>-1$.
Then
\begin{equation}\label{key1}
\mathscr{B}^{0,v}_q(X) \subseteq L(\log^+\!\!L)^{1+v}(X).
\end{equation}

Moreover, if $\mu(X)<\infty$, then
\begin{equation}\label{key2}
\mathscr{B}^{0,v}_q(X) = L(\log^+\!\!L)^{1+v}(X).
\end{equation}
\end{theorem}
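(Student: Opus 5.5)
We prove \eqref{key1} first and then, assuming $\mu(X)<\infty$, the reverse inclusion needed for \eqref{key2}.

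\textbf{Step 1: a single generalized block.} Let $a$ be a generalized $q$-block supported on $E$ with $\mu(E)=t$. Since $\phi_{0,v}(t)\approx (\log^+(1/t))^{1+v}/(1+v)$ for small $t$, it suffices to show
\[
\int_X |a|\bigl(\log(2+|a|)\bigr)^{1+v}\,d\mu\le C\bigl(1+\phi_{0,v}(t)\bigr)
\]
with $C$ independent of $a$.

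For $q=\infty$ this is immediate: $|a|\le 1/t$ on $E$, so the integral is at most $(\log(2+1/t))^{1+v}$.

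For $q<\infty$, split $E$ into two parts.
\begin{itemize}
\item On $\{|a|\le t^{-2}\}$ (say), $\log(2+|a|)\lesssim 1+\log^+(1/t)$. Since $\|a\|_{L^1}\le \mu(E)^{1/q'}\|a\|_q\le 1$, this part contributes $\lesssim 1+\phi_{0,v}(t)$.
\item On $\{|a|>t^{-2}\}$, use $\log(2+|a|)^{1+v}\lesssim_\epsilon |a|^{\epsilon}$ with $\epsilon=(q-1)/2$. Chebyshev with the $L^q$ bound gives
\[
\int_{\{|a|>\lambda\}}|a|^{1+\epsilon}\le \lambda^{1+\epsilon-q}\|a\|_q^q,\qquad \|a\|_q^q\le t^{1-q}.
\]
With $\lambda=t^{-2}$ this is $\lesssim t^{2(q-1)/2}\,t^{1-q}=1$.
\end{itemize}
If $v<0$ the exponent is negative and the first bound is trivial, so $v\in(-1,0)$ is harmless. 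When $t\ge1$, $\|a\|_\infty$ may still be large, but the same splitting with threshold $\lambda=1$ works.

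\textbf{Step 2: summing the blocks.} Write $\Phi(s)=s(\log(2+s))^{1+v}$. The map $f\mapsto\int\Phi(|f|)$ is not subadditive, so we cannot simply add the bounds from Step 1. Instead use:
\begin{itemize}
\item $\Phi$ is convex (for $v>-1$ it is at least convex up to equivalence after modifying near $0$);
\item $\Phi(\lambda s)\le C\lambda\bigl(\log(2+\lambda)\bigr)^{1+v}\Phi(s)$ for $\lambda\ge1$.
\end{itemize}
Given $f=\sum\lambda_ja_j$ with $\Lambda=\sum|\lambda_j|(1+\phi(\mu(E_j)))$, write
\[
|f|\le \sum_j \frac{|\lambda_j|}{\Lambda_0}\,\bigl(\Lambda_0|a_j|\bigr),\qquad \Lambda_0=\sum_j|\lambda_j|.
\]
Jensen then gives
\[
\int\Phi(|f|)\le \sum_j \frac{|\lambda_j|}{\Lambda_0}\int\Phi(\Lambda_0|a_j|)\lesssim (\log(2+\Lambda_0))^{1+v}\,\Lambda.
\]
This shows the modular is finite, hence $f\in L(\log^+ L)^{1+v}(X)$, which proves \eqref{key1}. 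The membership statement is all that is claimed, so the extra factor in $\Lambda_0$ causes no harm.

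\textbf{Step 3: the converse when $\mu(X)<\infty$.} Given $f$ with finite modular, decompose by level sets:
\[
E_k=\{2^{k}\le|f|<2^{k+1}\},\ k\ge1,\qquad E_0=\{|f|<2\}.
\]
Set $a_k=f\chi_{E_k}/(2^{k+1}\mu(E_k))$ and $\lambda_k=2^{k+1}\mu(E_k)$. Since $|a_k|\le \mu(E_k)^{-1}$ on $E_k$, each $a_k$ is a generalized $\infty$-block, hence a generalized $q$-block for every $q$. The $k=0$ term has $\mu(E_0)\le\mu(X)<\infty$, so its cost is finite.

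The main obstacle is bounding
\[
\sum_k 2^{k}\mu(E_k)\bigl(1+\log^+(1/\mu(E_k))\bigr)^{1+v}
\]
by the modular $\sum_k 2^k k^{1+v}\mu(E_k)$. Split the indices $k$:
\begin{itemize}
\item If $\mu(E_k)\ge 2^{-2k}$, then $\log(1/\mu(E_k))\le 2k\log 2$, and these terms are controlled by the modular.
\item If $\mu(E_k)<2^{-2k}$, then $2^k\mu(E_k)\log^{1+v}(1/\mu(E_k))\lesssim 2^k\mu(E_k)^{1/2}\le 2^{-k/2}\cdot 2^k\cdot 2^{-k}$, which is summable.
\end{itemize}
For $v<0$ the function $\log^{1+v}$ is still increasing, so the same case split applies. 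The series converges in $L^1$ because $\sum_k|\lambda_k|<\infty$. This proves \eqref{key2}.
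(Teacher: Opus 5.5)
Your route is essentially the paper's. Jensen's inequality for $\Phi(s)=s(\log(2+s))^{1+v}$ together with the scaling bound for $\Phi$ reduces \eqref{key1} to a uniform single-block estimate, which is what the paper's normalization $\sum_j|\lambda_j|=1$ does; $\Phi$ is in fact genuinely convex for every $v>-1$, so no modification near $0$ is needed. Step 3 is the paper's dyadic level-set decomposition with $\lambda_k=2^{k+1}\mu(E_k)$. Your single-block argument (splitting at height $t^{-2}$ and using Chebyshev) is a harmless variant of the paper's rescaling by $\gamma=\mu(B)$, which amounts to splitting at height $1/t$ with exponent $q$; both work. In Step 2, when $\Lambda_0<1$, just use $\Phi(\lambda s)\le\lambda\Phi(s)$.

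The step that fails as written is the small-measure case of Step 3. From $\mu(E_k)<2^{-2k}$ you only get $\mu(E_k)^{1/2}<2^{-k}$, i.e.\ $2^k\mu(E_k)^{1/2}<1$. Your claimed bound $2^k\mu(E_k)^{1/2}\le 2^{-k/2}$ would need $\mu(E_k)\le 2^{-3k}$, and a bound of order $1$ per term gives no convergence. The threshold and the power must be matched, and either of the following fixes works:
\begin{enumerate}
\item Keep the threshold $2^{-2k}$ but use $x(\log(1/x))^{1+v}\lesssim x^{3/4}$ on $(0,1)$, so the terms are $\lesssim 2^k2^{-3k/2}=2^{-k/2}$.
\item As the paper does, take the threshold $2^{-4k}$, so that $2^k\mu(E_k)^{1/2}\le 2^{-k}$. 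In the complementary range $\log^+(1/\mu(E_k))\le 4k\log 2$, so those terms are still dominated by $\sum_k 2^kk^{1+v}\mu(E_k)$.
\end{enumerate}
With either correction the proof goes through.
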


\begin{remark}{\it
Theorem \ref{thm} provides a characterization of the Orlicz-type space
$L(\log^+\!\!L)^{1+v}(X)$
in terms of generalized block-type decompositions in the case of $\mu(X)<\infty$.}
\end{remark}

%Theorem \eqref{thm} depends only on the measure structure and does not involve the quasi-metric $d$.
%Consequently, the inclusion \eqref{key1} remains valid for any quasi-metric measure space $(X,d,\mu)$.
%From \eqref{contain} and \eqref{e1} we immediately obtain
%\[
%B^{0,v}_q(X)
%\subseteq
%L(\log^+\!\!L)^{1+v}(X).
%\]
%for every quasi-metric measure space $(X,d,\mu)$.
%

\subsection{ Special Cases}
As a complement to the aforementioned results,
we consider two special cases that further reveal the role of the parameter $q$ and the logarithmic weight. The first is the endpoint case when  $q=1$. The second arises when the logarithmic weight is removed.

For the first case, we have the following result.
\begin{theorem}\label{B1case}
Let $(X,d,\mu)$ be an $s$-Ahlfors-regular quasi-metric measure space.
Then
\[
B^{0,v}_{1}(X)=L^1(X),
\]
for all \(v>-1\).
More precisely, for any \(f\in B^{0,v}_{1}(X)\),
\[
\|f\|_{L^1(X)}
\le
\|f\|_{B^{0,v}_1(X)}
\leq 
\|f\|_{L^1(X)} \Bigl( 1+ \frac1{1+v} \big(\log^+(\frac{1}{\mu(X)})\big)^{1+v} \Bigr).
\]
\end{theorem}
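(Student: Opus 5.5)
The plan is to compute the size of a $1$-block and evaluate $\phi_{0,v}$ explicitly; both norm inequalities then follow directly.

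\textbf{Preliminaries.} For $q=1$ we have $q'=\infty$, so $1/q'=0$. The normalization of a $1$-block $a$ supported on a ball $B$ therefore reads simply $\|a\|_{L^1(X)}\le 1$, independently of $\mu(B)$. Next, $\phi_{0,v}(t)=0$ for $t\ge 1$. For $0<t<1$, the substitution $u=\log(1/s)$ gives
\[
\phi_{0,v}(t)=\int_t^1 s^{-1}\bigl(\log(1/s)\bigr)^v\,ds=\frac{1}{1+v}\bigl(\log(1/t)\bigr)^{1+v}.
\]
Hence $\phi_{0,v}(t)=\frac1{1+v}\bigl(\log^+(1/t)\bigr)^{1+v}$ for all $t>0$, with the convention that this quantity is $0$ when $t=\infty$.

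\textbf{Lower bound.} Suppose $f=\sum_j\lambda_ja_j$ is any admissible decomposition. Then
\[
\|f\|_{L^1}\le\sum_j|\lambda_j|\,\|a_j\|_{L^1}\le\sum_j|\lambda_j|\le\sum_j|\lambda_j|\bigl(1+\phi_{0,v}(\mu(B_j))\bigr).
\]
Taking the infimum over all decompositions gives $\|f\|_{L^1}\le\|f\|_{B^{0,v}_1(X)}$. This also shows $B^{0,v}_1(X)\subseteq L^1(X)$, which is in any case built into the definition.

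\textbf{Upper bound.} Let $0\neq f\in L^1(X)$; we split according to whether $\mu(X)$ is finite.

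\emph{Case $\mu(X)<\infty$.} If $\operatorname{diam}(X)=\infty$, the lower Ahlfors bound $\mu(B(x,r))\ge c\,r^s$ for all $r$ would force $\mu(X)=\infty$. So $\operatorname{diam}(X)<\infty$, and $X=B(x_0,R)$ for any $R>\operatorname{diam}(X)$; thus $X$ itself is a ball. Its measure is positive by the lower Ahlfors bound and finite by assumption. Therefore $a=f/\|f\|_{L^1}$ is a $1$-block on $X$, and $f=\|f\|_{L^1}\,a$ is a single-term decomposition. Its cost is
\[
\|f\|_{L^1}\Bigl(1+\tfrac1{1+v}\bigl(\log^+(1/\mu(X))\bigr)^{1+v}\Bigr),
\]
which is exactly the claimed bound.

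\emph{Case $\mu(X)=\infty$.} Here $\operatorname{diam}(X)=\infty$. Otherwise $X$ would be the increasing union of the balls $B(x_0,r)$ with $r<\operatorname{diam}(X)$, each of measure at most $C\operatorname{diam}(X)^s$, so $\mu(X)$ would be finite. Fix $x_0$. By Ahlfors regularity every ball $B(x_0,r)$ has finite positive measure, and we may choose $R_0$ with $\mu(B(x_0,R_0))\ge1$. Set $A_1=B(x_0,R_0)$ and $A_j=B(x_0,R_0+j-1)\setminus B(x_0,R_0+j-2)$ for $j\ge2$. These sets partition $X$, and each $A_j$ lies in the ball $B_j=B(x_0,R_0+j-1)$, which satisfies $\mu(B_j)\ge1$. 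Write
\[
f=\sum_j\lambda_ja_j,\qquad \lambda_j=\|f\chi_{A_j}\|_{L^1},\qquad a_j=f\chi_{A_j}/\lambda_j,
\]
omitting the indices with $\lambda_j=0$. Each $a_j$ is a $1$-block on $B_j$. Since $\phi_{0,v}(\mu(B_j))=0$, the cost of this decomposition is
\[
\sum_j\lambda_j=\|f\|_{L^1},
\]
which again matches the bound, now with $\log^+(1/\mu(X))=0$. Convergence of $\sum_j\lambda_ja_j$ to $f$ in $L^1$ (and pointwise) is immediate from the partition.

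\textbf{Main obstacle.} The only delicate step is the geometric bookkeeping: showing that $X$ is a ball exactly when $\mu(X)<\infty$, and ensuring that every ball used has finite positive measure. Both points follow from \eqref{upc} as indicated above.
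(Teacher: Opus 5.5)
Your argument follows the paper's proof: the lower bound from $\|a_j\|_{L^1(X)}\le 1$, a single block on $X$ when $X$ is a ball of finite measure, and otherwise an annular decomposition about $x_0$ whose supports have measure at least $1$. The only difference is the case split. You split at finite versus infinite $\mu(X)$, while the paper splits at $\mu(X)\le 1$ versus $\mu(X)>1$. Your split lets you use a single block on $X$ also when $1<\mu(X)<\infty$, and that part is correct.

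There is, however, a false step in the case $\mu(X)=\infty$, and it is exactly the point you flagged as delicate. You argue that $\operatorname{diam}(X)<\infty$ would give $X=\bigcup_{r<\operatorname{diam}(X)}B(x_0,r)$. But that union is only $\{y:d(x_0,y)<\operatorname{diam}(X)\}$. Points at distance exactly $\operatorname{diam}(X)$ from $x_0$ are missed, and since \eqref{upc} is only imposed for $r<\operatorname{diam}(X)$, nothing controls them.

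Here is a concrete example. Let $X$ be countably many disjoint copies of $[0,1]$, with $d(x,y)=|x-y|$ inside a copy and $d(x,y)=1$ between different copies, and let $\mu=\tfrac12\,dx$ on each copy.
\begin{itemize}
\item Then $\operatorname{diam}(X)=1$ and $r/2\le\mu(B(x,r))\le r$ for $0<r<1$, so \eqref{upc} holds with $s=1$. Yet $\mu(X)=\infty$.
\item Any $R_0$ with $\mu(B(x_0,R_0))\ge 1$ must satisfy $R_0>1$, so $B(x_0,R_0)=X$, which is not an admissible support.
\item Every ball of finite measure has measure at most $\tfrac12$. Hence every decomposition costs at least $\bigl(1+\frac{(\log 2)^{1+v}}{1+v}\bigr)\|f\|_{L^1(X)}$.
\item So the asserted bound $\|f\|_{B^{0,v}_1(X)}\le\|f\|_{L^1(X)}$ fails for every $f\neq 0$. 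The set equality still holds, by decomposing along the copies, but the sharp constant does not.
\end{itemize}

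Thus no better argument can close this step. The statement needs the standard convention that bounded Ahlfors-regular spaces have finite measure. For example, require the upper bound in \eqref{upc} for all $r>0$, or simply assume $\mu(X)<\infty$ whenever $\operatorname{diam}(X)<\infty$. The paper's Case 2 relies on the same assumption tacitly, since it never checks that $\mu(B(x_0,jR))<\infty$. Once that convention is stated explicitly, your proof is complete.
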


The second  case concerns the block spaces introduced by
Blasco, Ruiz and Vega \cite{Bla99} (see also \cite{Dun24}), which serve as the preduals of Morrey spaces.
We say that $a(x)$ is a \emph{$(q, \alpha)$-block} with $ \alpha \geq 0$ if there exists a ball $B(x,t) \subseteq X$  such that
$$
\supp a \subseteq B(x,t),\quad 
\|a\|_{L^q(B(x,t))}\leq \mu(B(x,t))^{-1/q'} t^{\alpha}.
$$

\begin{definition}[Block space $\tilde{B}_{q,\alpha}(X)$]{\it
Let $1<q\le \infty$ and $\alpha \geq 0$.
The space $\tilde{B}_{q,\alpha}(X)$ consists of all functions $f\in L^1(X)$ admitting a decomposition
\[
f = \sum_{j=1}^{\infty} \lambda_j a_j,
\]
where each $a_j$ is a $(q,\alpha)$-block supported on a ball $B_j\subseteq X$,
$\lambda_j\in\mathbb{C}$, and
$
\sum_{j=1}^{\infty} |\lambda_j|<\infty.
$}
\end{definition}

The norm on $\tilde{B}_{q,\alpha}(X)$ is given by
\[
\|f\|_{\tilde{B}_{q,\alpha}(X)}
:=\inf\Big\{\sum_{j=1}^{\infty}|\lambda_j|\Big\},
\]
where the infimum is taken over all block decompositions
$
f=\sum_{j=1}^{\infty}\lambda_j a_j
$
such that $\lambda_j\in\mathbb C$ and each $a_j$ is a $(q,\alpha)$-block supported on a ball $B_j$.

It is easy to see that
$
(\tilde{B}_{q,\alpha}(X),\|\cdot\|_{\tilde{B}_{q,\alpha}(X)})
$
is a Banach space.
The case when $\alpha=0$ plays a special role in this paper.
Indeed,
$\tilde B_{q,0}(X)$
can be regarded as the block space
$B_q^{0,v}(X)$
without the logarithmic weight. Moreover, we observe that the space $\tilde B_{q,0}(X)$ also turns out to be $L^1(X)$. The result can be presented as follows.

%Recently,
%Dung, Dao, Duong and Nghia \cite{Dun24} showed that $B^{0,v}_q(X)$ is the dual of $M_{q',\alpha}(X)$ for $\alpha>0$,
%where the Morrey space
%$M_{q,\alpha}(X)$
%consists of all measurable functions $f$ satisfying
%$$
%\|f\|_{M_{q,\alpha}(X)}= \sup_{B(x,t)} t^{\alpha} \Big( \frac{1}{ \mu(B(x,t)) } \int_{B(x,t)}|f(y)|^q  d \mu(y)  \Big)^{1/q}<\infty.
%$$
%If $(X,d,\mu)$ is a space of homogeneous type with a regular doubling Borel measure, then the Lebesgue differentiation theorem (see, e.g., \cite{CW71, Tol04}) holds. 
%Consequently, when $\alpha=0$, the Morrey space $M_{q,0}(X)$ coincides with $L^\infty(X)$.
%This naturally raises the question of whether the associated block space reduces to $L^1(X)$.
%The following theorem gives an affirmative answer.

\begin{theorem}\label{Bqcase}
Let $(X,d,\mu)$ be an $s$-Ahlfors-regular quasi-metric measure space.
Then
$$
\tilde{B}_{q,0}(X)=L^1(X),
$$
for all $1<q\le \infty$.
In particular, $$ \|f\|_{\tilde{B}_{q,0}(X)}\approx \|f\|_{L^1(X)}. $$
\end{theorem}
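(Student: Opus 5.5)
The statement to prove is Theorem \ref{Bqcase}: $\tilde B_{q,0}(X)=L^1(X)$ with equivalent norms. The plan is to prove the two norm inequalities separately. The inequality $\|f\|_{L^1}\le\|f\|_{\tilde B_{q,0}}$ is elementary. The reverse bound $\|f\|_{\tilde B_{q,0}}\lesssim \|f\|_{L^1}$ comes from decomposing an arbitrary $L^1$ function into $(q,0)$-blocks on very small balls. There the Lebesgue differentiation theorem makes $f$ nearly constant, so the normalized blocks have coefficients summing to about $\|f\|_{L^1}$.

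\emph{Step 1: $\tilde B_{q,0}(X)\subseteq L^1(X)$.} Let $a$ be a $(q,0)$-block supported on a ball $B$. By H\"older's inequality,
\[
\|a\|_{L^1}\le \mu(B)^{1/q'}\|a\|_{L^q(B)}\le 1 .
\]
Hence for $f=\sum_j\lambda_j a_j$ the series converges absolutely in $L^1$ and $\|f\|_{L^1}\le\sum_j|\lambda_j|$. Taking the infimum over decompositions gives $\|f\|_{L^1}\le \|f\|_{\tilde B_{q,0}}$. This is the same computation that gives the lower bound in Theorem \ref{B1case}.

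\emph{Step 2: approximation.} Fix $f\in L^1(X)$ and $\varepsilon>0$. The goal is a finite decomposition $f=g+h$, where $g=\sum_{i}c_i\chi_{B_i}$ and:
\begin{itemize}
\item the $B_i$ are balls (pairwise disjoint up to null sets, or at least with $\sum_i|c_i|\mu(B_i)\le \|f\|_{L^1}+\varepsilon$);
\item $\|h\|_{L^1}\le\varepsilon$.
\end{itemize}
Each $\chi_{B_i}/\mu(B_i)$ is a $(q,0)$-block, since $\|\chi_{B}\|_{L^q}/\mu(B)=\mu(B)^{-1/q'}$. Therefore $g$ has block norm at most $\sum_i|c_i|\mu(B_i)$.

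To build $g$, use Christ's dyadic cubes on the space of homogeneous type $X$, which is available since Ahlfors regularity gives doubling. The cubes $Q$ of a fine generation satisfy $B(z_Q,c\delta^k)\subseteq Q\subseteq B(z_Q,C\delta^k)$. Conditional expectations $E_kf$ converge to $f$ in $L^1$, and $\|E_kf\|_{L^1}\le\|f\|_{L^1}$. The cubes are not balls, but the problem is harmless: each dyadic cube $Q$ is contained in a ball $B_Q$ with $\mu(B_Q)\le C'\mu(Q)$. Then $\chi_Q/\mu(Q)$ satisfies
\[
\|\chi_Q/\mu(Q)\|_{L^q}=\mu(Q)^{-1/q'}\le C''\mu(B_Q)^{-1/q'},
\]
so it is $C''$ times a $(q,0)$-block supported on $B_Q$. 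Consequently
\[
\|E_kf\|_{\tilde B_{q,0}}\le C''\sum_Q |\langle f\rangle_Q|\mu(Q)\le C''\|f\|_{L^1}.
\]
Only norm equivalence is claimed, so the constant $C''$ is acceptable.

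\emph{Step 3: iterate with a geometric series.} Set $h_0=f$. At stage $m$, choose $k_m$ with $\|h_{m}-E_{k_m}h_m\|_{L^1}\le 2^{-m-1}\|f\|_{L^1}$, and put $h_{m+1}=h_m-E_{k_m}h_m$. Then $f=\sum_m E_{k_m}h_m$ converges in $L^1$. The block norm is bounded by
\[
C''\sum_m\|h_m\|_{L^1}\le C''\Bigl(1+\sum_{m\ge1}2^{-m}\Bigr)\|f\|_{L^1}=2C''\|f\|_{L^1}.
\]
The countable family of blocks from all stages gives an admissible decomposition, so $\|f\|_{\tilde B_{q,0}}\le 2C''\|f\|_{L^1}$.

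If $\mu(X)<\infty$ is bounded, the cubes of coarse generations cover $X$ and there is no issue. In the unbounded case, cubes of a fixed generation still partition $X$, and each has finite measure comparable to a ball.

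\emph{Main obstacle.} The delicate point is the dyadic machinery in a quasi-metric setting. One needs Christ's cubes, the comparability $\mu(B_Q)\approx\mu(Q)$ from Ahlfors regularity, and the $L^1$ convergence $E_kf\to f$. This last fact follows from density of continuous (or simple) functions together with the Lebesgue differentiation theorem for doubling measures. If the cubes are troublesome, the fallback is a Vitali-type covering by small disjoint balls that cover almost all of the set where a simple approximant of $f$ is constant. Applying it to each level set of a simple approximant gives the same geometric-series argument.
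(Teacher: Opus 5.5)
Your proof is correct, and Step 1 is exactly the paper's H\"older argument, but for the reverse inequality you take a different route.

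The paper neither approximates nor iterates. After reducing to $q=\infty$, it splits $f$ along the level sets $E_l=\{2^l\le|f|<2^{l+1}\}$, $l\in\mathbb Z$. It uses the Lebesgue differentiation theorem to attach to a.e.\ $x\in E_l$ a ball in which $E_l$ has relative measure at least $1/2$. It then applies the Vitali covering lemma to get disjoint balls whose dilates $B(x_{l,j},Cr_{l,j})$ cover $E_l$ with total measure $\lesssim\mu(E_l)$. Its blocks are pieces of $f$ itself, $f\chi_{F_{l,j}}/(2^{l+1}\mu(B(x_{l,j},Cr_{l,j})))$. These are $\infty$-blocks simply because $|f|<2^{l+1}$ on $E_l$, so the cost $\lesssim\sum_l 2^l\mu(E_l)\approx\|f\|_{L^1(X)}$ comes out in one shot.

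Your blocks are normalized indicators of Christ cubes, enlarged to balls of comparable measure. Each application of $E_k$ costs at most $C''\|h\|_{L^1}$ but only approximates $h$. That is why you need the $L^1$ convergence $E_kf\to f$ and the geometric-series iteration to absorb the remainder.

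What each route buys:
\begin{itemize}
\item The paper's route is more direct: no approximation step and no martingale-type convergence. The price is some covering bookkeeping; for instance, $F_{l,j}$ has to remove $\bigcup_{i<j}F_{l,i}$, not only $F_{l,j-1}$, to make the pieces disjoint.
\item Your route replaces the covering lemma by the dyadic structure the paper already uses in Lemmas \ref{Gk} and \ref{lemEm}. It treats all $q$ at once and gives the explicit constant $2\bigl(\sup_Q\mu(B_Q)/\mu(Q)\bigr)^{1/q'}$.
\end{itemize}
Both arguments use only the doubling property.

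Your ``fallback'' is close to the paper's proof. The point it misses is that a block need only be bounded, not constant. So one can cover the level sets of $f$ itself, rather than those of a simple approximant, and dispense with the iteration altogether.

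A minor presentational point: the goal announced at the start of your Step 2 (disjoint balls, constant $1+\varepsilon$) is not what the cube construction delivers. Nothing later depends on it.
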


\begin{remark}{\it Theorems \ref{B1case} and \ref{Bqcase} show that the corresponding block spaces coincide with $L^1(X)$ either at the endpoint case $q=1$
or when the logarithmic weight is removed.}\end{remark}

This paper is organized as follows.
In Section \ref{sec3}, we establish the proper inclusion
$B_q^{0,v}(X)\subsetneq L(\log^+\!\!L)^{1+v}(X)$
and prove Theorem \ref{thmfan}.
Section \ref{sec4} is devoted to study the sharpness of this inclusion via proving Theorem \ref{3log}.
In Section \ref{sec2}, we study the generalized block space
$\mathscr B_q^{0,v}(X)$
and establish Theorem \ref{thm}.
Finally, Sections \ref{sec5} and \ref{sec6} provide the proofs of Theorems \ref{B1case} and \ref{Bqcase}, respectively.

\medskip
\noindent
\textbf{Notation.}
For $p\in(1,\infty)$, let $p'=p/(p-1)$ denote the conjugate exponent, with the conventions $1'=\infty$ and $\infty'=1$. 
We use $C$ and $c$ to denote positive constants which may change from line to line. 
We write $A\lesssim B$ if $A\leq CB$, and $A\approx B$ if $A\lesssim B$ and $A\lesssim B$.

\section{Proof of Theorem \ref{thmfan}}\label{sec3}
\setcounter{equation}{0}%公式从1开始编号

To prove Theorem \ref{thmfan}, it suffices to prove  the inclusion relation
\begin{equation}\label{e1}
B_q^{0,v}(X)
\subseteq
L(\log^+\!\!L)^{1+v}(X)
\end{equation}
and to construct a function
\begin{equation}\label{example}
f\in L(\log^+\!\!L)^{1+v}(X)
\setminus
B_q^{0,v}(X).
\end{equation}

In the following, we will prove \eqref{e1} and \eqref{example}, respectively.
\subsection{Proof of (\ref{e1})}
By the definition of ${B}^{0,v}_q(X)$, every $q$-block is also a $p$-block for $1<p<q\le\infty$. Consequently,
\begin{equation}\label{Bbh}
{B}^{0,v}_q(X) \subseteq {B}^{0,v}_p(X),\quad 1<p<q\le \infty.
\end{equation}

Thus, it remains to consider the case of $1<q< \infty.$
Suppose $f\in {B}^{0,v}_q(X)$.
Then there exists a decomposition
\[
f=\sum_{j=1}^{\infty}\lambda_j a_j,
\]
where $\lambda_j\in \mathbb C$ and each $a_j$ is a $q$-block satisfying
\begin{equation}\label{al}
\supp a_j \subseteq B_j\subseteq X, \quad \|a_j\|_{L^q(B_j)} \le \mu(B_j)^{-1/q'}.
\end{equation}
By H\"older's inequality,
\begin{equation}\label{al1}
\|a_j\|_{L^1(X)} \le \mu(B_j)^{1/q'} \|a_j\|_{L^q(B_j)} \le 1.
\end{equation}
Moreover,
\[
\|f\|_{{B}^{0,v}_q(X)} \le \sum_{j=1}^{\infty} |\lambda_j| \bigl(1+\phi_{0,v}(\mu(B_j))\bigr)<\infty.
\]

Since
\begin{equation}\label{phi0v}
\phi_{0,v}(t)=\frac{1}{1+v}\bigl(\log^+(\frac{1}{t})\bigr)^{1+v},
\end{equation}
we obtain
\[\sum_{j=1}^{\infty}|\lambda_j|\Big(1+\frac1{1+v}\bigl(\log^+(\frac{1}{\mu(B_j)})\bigr)^{1+v}\Big)<\infty.
\]

To prove \eqref{e1}, it remains to show that
$$\|f\|_{L(\log^+\!\!L)^{1+v}(X)}<\infty.$$
We now show that the problem can be reduced to the case
$\sum_{j=1}^{\infty}|\lambda_j|=1.$
Set
$\alpha=\sum_{j=1}^{\infty}|\lambda_j|.$
If $\alpha\neq 1$, we define
\[
\tilde{\lambda}_j=\alpha^{-1}\lambda_j
\quad\text{and}\quad
\tilde{f}=\alpha^{-1}f.
\]

To justify this reduction, we will use the following elementary inequalities.
First, for all $a,b>0$,
\begin{equation}\label{logab}
\log^+(ab) \le \log^+a+\log^+b.
\end{equation}

Next, for all $a>0$,
\begin{equation}\label{log2+a}
\log^+a \le \log(2+a) \le \log4+\log^+a.
\end{equation}

Consequently, for all $a,b>0$,
\begin{equation}\label{logab2}
\log(2+ab) \le \log4+\log^+a+\log^+b.
\end{equation}

Finally, for all $a,b,r>0$,
\begin{equation}\label{abr}
(a+b)^r \le 2^r(a^r+b^r).
\end{equation}

By \eqref{log2+a}, \eqref{logab2}, and \eqref{abr}, we have
\begin{equation}\label{rescal}
\begin{aligned}
\|f\|_{L(\log^+\!\!L)^{1+v}(X)}
&=\int_X |\alpha \tilde f| \bigl(\log(2+|\alpha\tilde f|)\bigr)^{1+v} \,d\mu\\
&\le \alpha \int_X |\tilde f|\Bigl(\log4+\log^+\alpha+\log^+|\tilde f|\Bigr)^{1+v}\,d\mu
\\
&\lesssim \alpha(1+\log^+\alpha)^{1+v}\|\tilde f\|_{L^1(X)}
+ \alpha \|\tilde f\|_{L(\log^+\!\!L)^{1+v}(X)}.
\end{aligned}
\end{equation}
Using the fact that
$
\|\tilde f\|_{L^1(X)} \lesssim \|\tilde f\|_{L(\log^+\!\!L)^{1+v}(X)},
$
we further obtain
\[
\|f\|_{L(\log^+\!\!L)^{1+v}(X)} \lesssim \alpha \bigl(1+(\log^+\alpha)^{1+v}\bigr) \|\tilde f\|_{L(\log^+\!\!L)^{1+v}(X)}.
\]

On the other hand,
\begin{align*}
\|\tilde f\|_{{B}^{0,v}_q(X)}
&\le \sum_{j=1}^{\infty} |\tilde{\lambda}_j| \Big(1+\frac1{1+v} \bigl(\log^+(\frac{1}{\mu(B_j)})\bigr)^{1+v}\Big)\\
&=\alpha^{-1} \sum_{j=1}^{\infty} |\lambda_j|\Big(1+\frac1{1+v}\bigl(\log^+(\frac{1}{\mu(B_j)})\bigr)^{1+v}\Big)<\infty.
\end{align*}
This means that it suffices to prove $\|\tilde f\|_{L(\log^+\!\!L)^{1+v}(X)}<\infty$ under condition $\|\tilde f\|_{{B}^{0,v}_q(X)}<\infty$.
Therefore, we can  assume that
$
\sum_{j=1}^{\infty} |\lambda_j| =1.
$

We now estimate the
$L(\log^+\!\!L)^{1+v}(X)$
quasi-norm of $f$ under the assumptions $\sum_{j=1}^{\infty} |\lambda_j|=1$ and
\[
\sum_{j=1}^{\infty} |\lambda_j| \Big(1+\frac1{1+v}\bigl(\log^+(\frac{1}{\mu(B_j)})\bigr)^{1+v}\Big)<\infty.
\]

Since the function
$
t\mapsto t(\log(2+t))^{1+v}
$
is convex, Jensen's inequality yields
\begin{align*}
\|f\|_{L(\log^+\!\!L)^{1+v}(X)}
&=\Big\|\sum_{j=1}^{\infty} \lambda_j a_j \Big\|_{L(\log^+\!\!L)^{1+v}(X)} \le \sum_{j=1}^{\infty} |\lambda_j|\,\|a_j\|_{L(\log^+\!\!L)^{1+v}(B_j)}.
\end{align*}
Hence, it remains to prove that
\begin{equation}\label{claim}
\|a_j\|_{L(\log^+\!\!L)^{1+v}(B_j)} \lesssim 1+\frac1{1+v} \bigl(\log^+(\frac{1}{\mu(B_j)})\bigr)^{1+v}.
\end{equation}

Indeed, once \eqref{claim} is established, we obtain
\begin{align*}
\|f\|_{L(\log^+\!\!L)^{1+v}(X)}
&\le \sum_{j=1}^{\infty} |\lambda_j|\, \|a_j\|_{L(\log^+\!\!L)^{1+v}(B_j)}\\
&\lesssim \sum_{j=1}^{\infty} |\lambda_j| \Big(1+\frac1{1+v}\bigl(\log^+(\frac{1}{\mu(B_j)})\bigr)^{1+v}\Big)<\infty.
\end{align*}
Hence,
$
f\in L(\log^+\!\!L)^{1+v}(X).
$
This completes the proof of \eqref{e1}, modulo the verification of \eqref{claim}.

Before proving \eqref{claim}, we derive an estimate for the
$L(\log^+\!\!L)^{1+v}(B_j)$
quasi-norm of a function $g$.
Using \eqref{log2+a} and \eqref{abr}, for any
$g\in L(\log^+\!\!L)^{1+v}(B_j)$,
we have
\begin{equation}\label{logq0}
\begin{aligned}
\|g\|_{L(\log^+\!\!L)^{1+v}(B_j)}
&=\int_{B_j} |g| (\log(2+|g|))^{1+v}\,d\mu\\
&\le \int_{B_j} |g| (\log4+\log^+|g|)^{1+v} \,d\mu\\
&\le \int_{B_j} |g| 2^{1+v} \Big((\log4)^{1+v}+(\log^+|g|)^{1+v} \Big)\,d\mu\\
&\lesssim \int_{B_j}|g|\,d\mu + \int_{B_j} |g|(\log^+|g|)^{1+v}\,d\mu.
\end{aligned}
\end{equation}

For the second term, we apply the elementary inequality
\[
\log t\le \frac{t^\delta}{\delta}, \qquad t>1,\ \delta>0.
\]
This yields
\begin{equation}\label{logq}
\begin{aligned}
\int_{B_j} |g| (\log^+|g|)^{1+v}\,d\mu
&= \int_{\{x\in B_j:|g(x)|>1\}} |g|(\log|g|)^{1+v}\,d\mu\\
&\le \int_{\{x\in B_j:|g(x)|>1\}} |g| \Big(\frac{|g|^\delta}{\delta}\Big)^{1+v}\,d\mu\\
&\le\Big(\frac1\delta\Big)^{1+v}\int_{B_j} |g|^{\delta(1+v)+1} \,d\mu\\
&=\frac{\|g\|_{L^{\delta(1+v)+1}(B_j)}^{\delta(1+v)+1}}{\delta^{1+v}}.
\end{aligned}
\end{equation}

Combining \eqref{logq0} and \eqref{logq}, we obtain
\begin{equation}\label{logq1}
\begin{aligned}
\|g\|_{L(\log^+\!\!L)^{1+v}(B_j)}
&\lesssim \|g\|_{L^1(B_j)} + \frac{\|g\|_{L^{\delta(1+v)+1}(B_j)}^{\delta(1+v)+1}}{\delta^{1+v}},
\end{aligned}
\end{equation}
where $\delta>0$ will be chosen later.

We now turn to the proof of \eqref{claim}.
Taking
$
\delta=\frac{q-1}{1+v}
$
and applying \eqref{logq1} to $a_j$ together with \eqref{al}, \eqref{al1}, we obtain
\begin{align*}
\|a_j\|_{L(\log^+\!\!L)^{1+v}(B_j)}
\lesssim \|a_j\|_{L^1(B_j)} + \|a_j\|_{L^q(B_j)}^q
\lesssim 1+\mu(B_j)^{-q/q'}.
\end{align*}
However, this bound is not sufficient to control the right-hand side of \eqref{claim}.

To overcome this difficulty, we introduce a rescaling.
Define
\[
\tilde a_j=\gamma_j a_j, \qquad \gamma_j>0,
\]
where $\gamma_j$ will be chosen later.
By \eqref{al}, we have
\begin{equation}\label{aw1}
\begin{aligned}
\|\tilde a_j\|_{L^q(B_j)}
&=\gamma_j \|a_j\|_{L^q(B_j)} \le \gamma_j \mu(B_j)^{-1/q'},\\ 
\|\tilde a_j\|_{L^1(B_j)}
&= \gamma_j \|a_j\|_{L^1(B_j)} \le \gamma_j.
\end{aligned}
\end{equation}

Applying \eqref{logq1} to $\tilde a_j$ with
$
\delta=\frac{q-1}{1+v},
$
and using \eqref{aw1}, we obtain
\begin{equation}\label{aw2}
\begin{aligned}
\|\tilde a_j\|_{L(\log^+\!\!L)^{1+v}(B_j)}
&\lesssim \|\tilde a_j\|_{L^1(B_j)} + \|\tilde a_j\|_{L^q(B_j)}^q \lesssim \gamma_j +\gamma_j^q \mu(B_j)^{-q/q'}.
\end{aligned}
\end{equation}

By the same argument of \eqref{rescal}, we have
\begin{align*}
\|a_j\|_{L(\log^+\!\!L)^{1+v}(B_j)}
\lesssim \frac1{\gamma_j} \Big(1+\log^+(\frac1{\gamma_j})\Big)^{1+v} \|\tilde a_j\|_{L^1(B_j)}
+ \frac1{\gamma_j} \|\tilde a_j\|_{L(\log^+\!\!L)^{1+v}(B_j)}.
\end{align*}
Applying \eqref{aw1} and \eqref{aw2}, we have
\begin{align*}
\|a_j\|_{L(\log^+\!\!L)^{1+v}(B_j)}
\lesssim 1 + (\log^+(\frac{1}{\gamma_j}))^{1+v}+ \gamma_j^{q-1} \mu(B_j)^{-q/q'}.
\end{align*}

Thus, to complete the proof of \eqref{claim}, it remains to choose $\gamma_j$ such that
\begin{equation}\label{aw3}
1 + \bigl( \log^+(\frac{1}{\gamma_j}) \bigr)^{1+v}+ \gamma_j^{q-1} \mu(B_j)^{-q/q'}
\lesssim 1+ \frac1{1+v} \bigl( \log^+(\frac{1}{\mu(B_j)}) \bigr)^{1+v},
\end{equation}
for any $1<q<\infty$ and $v>-1$.

We claim that \eqref{aw3} can be achieved by an appropriate choice of $\gamma_j$.
Indeed, suppose that there exists a constant $\eta>0$ such that
\[
\gamma_j^{q-1}
\mu(B_j)^{-q/q'}
\le
\eta.
\]
This immediately yields an upper bound for $\gamma_j$:
\begin{equation}\label{upb}
\gamma_j \le \eta^{\frac1{q-1}}\mu(B_j).
\end{equation}

Next, we consider the logarithmic term.
Suppose that there exists a constant $\kappa>0$ such that
\[
\bigl(\log^+(\frac1{\gamma_j})\bigr)^{1+v} 
\le\kappa\frac1{1+v} \bigl( \log^+(\frac1{\mu(B_j)})\bigr)^{1+v}.
\]
This immediately yields a lower bound for $\gamma_j$:
\begin{equation}\label{lowb}
\gamma_j \ge\mu(B_j)^{\big(\frac{\kappa}{1+v}\big)^{\frac1{1+v}}}.
\end{equation}

Combining \eqref{upb} and \eqref{lowb}, we obtain
\[
\mu(B_j)^{\big( \frac{\kappa}{1+v} \big)^{\frac1{1+v}}}
\le \gamma_j
\le \eta^{\frac1{q-1}} \mu(B_j).
\]

Choosing
$
\kappa=1+v
$
and 
$
\eta=1,
$
we may take
$
\gamma_j=\mu(B_j).
$
This choice works uniformly for all $1<q<\infty$ and $v>-1$, and hence \eqref{aw3} holds.
The proof of \eqref{e1} is complete.

\subsection{Proof of (\ref{example})}

We first establish some useful lemmas which play an important role in the proof of (\ref{example}).
\begin{lemma}\label{jz}
Let $1<q\le \infty$ and let $g$ be a measurable function.
Suppose that there exists a constant $C_g>0$ such that for every ball $B\subseteq X$,
\begin{equation}\label{jzy}
\Big(
\frac{1}{\mu(B)}
\int_B |g(x)|^{q'}\,d\mu(x)
\Big)^{1/q'}
\le
C_g
\Big(
1+\frac{1}{1+v}
\big(\log^+ \frac{1}{\mu(B)}\big)^{1+v}
\Big).
\end{equation}
Then for every $f\in B^{0,v}_q(X)$, we have
\[
\|fg\|_{L^1(X)}
\le
C_g\,\|f\|_{B^{0,v}_q(X)}.
\]
\end{lemma}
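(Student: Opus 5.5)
The plan is a duality argument carried out block by block, followed by passage to the infimum over all block decompositions.

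Fix $f\in B^{0,v}_q(X)$ and $\varepsilon>0$. By the definition of the norm, choose a decomposition $f=\sum_j\lambda_j a_j$ with $q$-blocks $a_j$ supported on balls $B_j$ such that
\[
\sum_{j=1}^\infty|\lambda_j|\bigl(1+\phi_{0,v}(\mu(B_j))\bigr)\le \|f\|_{B^{0,v}_q(X)}+\varepsilon .
\]
Recall from \eqref{phi0v} that $\phi_{0,v}(t)=\frac1{1+v}(\log^+\frac1t)^{1+v}$. The key pointwise step is
\[
|f(x)g(x)|\le\sum_j|\lambda_j||a_j(x)||g(x)|\quad\text{a.e.}
\]
The series $\sum_j\lambda_ja_j$ converges absolutely in $L^1(X)$ by \eqref{al1}, since $\|a_j\|_{L^1}\le1$ and $\sum|\lambda_j|<\infty$. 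Hence a subsequence of partial sums converges a.e. to $f$, and the series $\sum_j|\lambda_j||a_j|$ is finite a.e., which justifies the pointwise bound. By monotone convergence (Tonelli) we then obtain
\[
\|fg\|_{L^1(X)}\le\sum_j|\lambda_j|\int_{B_j}|a_j||g|\,d\mu .
\]

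For each block, apply Hölder's inequality with exponents $q,q'$ on $B_j$, using $\|a_j\|_{L^q(B_j)}\le\mu(B_j)^{-1/q'}$:
\[
\int_{B_j}|a_j||g|\,d\mu\le \mu(B_j)^{-1/q'}\Bigl(\int_{B_j}|g|^{q'}d\mu\Bigr)^{1/q'}=\Bigl(\frac1{\mu(B_j)}\int_{B_j}|g|^{q'}d\mu\Bigr)^{1/q'} .
\]
By hypothesis \eqref{jzy}, this is at most $C_g\bigl(1+\phi_{0,v}(\mu(B_j))\bigr)$. When $q=\infty$ we have $q'=1$ and the same computation holds verbatim. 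Summing over $j$ gives
\[
\|fg\|_{L^1}\le C_g\bigl(\|f\|_{B^{0,v}_q(X)}+\varepsilon\bigr),
\]
and letting $\varepsilon\to0$ yields the claim.

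The only delicate point is the a.e. identification of $f$ with the block series, which is needed for the pointwise domination. It is handled by the $L^1$ absolute convergence noted above, so no real obstacle is expected. The rest is a direct Hölder-and-sum estimate.
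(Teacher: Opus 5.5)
Your proof is correct and follows essentially the same route as the paper: the triangle inequality over the block decomposition, H\"older on each ball with $\|a_j\|_{L^q(B_j)}\le\mu(B_j)^{-1/q'}$, the hypothesis \eqref{jzy}, and then passing to the infimum over decompositions. Your $\varepsilon$-argument and the a.e.\ justification of $|fg|\le\sum_j|\lambda_j||a_j||g|$ via absolute $L^1$ convergence make explicit what the paper leaves implicit.
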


\begin{proof}
Let $f\in B^{0,v}_{q}(X)$. 
Then there exists a decomposition 
\[
f=\sum_{j=1}^{\infty}\lambda_j a_j,
\]
where $\lambda_j\in\mathbb C$ and each $a_j$ is a $q$-block satisfying
\[
\supp a_j\subseteq B_j\subseteq X,
\quad
\|a_j\|_{L^q(B_j)}\le \mu(B_j)^{-1/q'},
\]
and
\[
\sum_{j=1}^{\infty} |\lambda_j| \Big( 1+ \frac1{1+v} \bigl(\log^+(\frac1{\mu(B_j)})\bigr)^{1+v}  \Big) <\infty.
\]
Applying \eqref{jzy}, we obtain
\begin{align*}
\|fg\|_{L^1(X)} &=\|\sum_{j=1}^{\infty}\lambda_j a_j g\|_{L^1(X)}\leq \sum_{j=1}^{\infty}|\lambda_j|  \|a_j g\|_{L^1(X)}\\
&\leq \sum_{j=1}^{\infty}|\lambda_j|  \|a_j\|_{L^q(B_j)} \|g\|_{L^{q'}(B_j)} \\
&\leq C_g \sum_{j=1}^{\infty}|\lambda_j| \mu(B_j)^{-1/q'} \mu(B_j)^{1/q'} \Big( 1+\frac{1}{1+v}\bigl(\log^+(\frac1{\mu(B_j)})\bigr)^{1+v} \Big)\\
&= C_g \sum_{j=1}^{\infty}|\lambda_j| \Big( 1+ \frac1{1+v} \bigl(\log^+(\frac1{\mu(B_j)})\bigr)^{1+v}  \Big).
\end{align*}

Recalling the definition of $\|\cdot\|_{B^{0,v}_{q}(X)}$, we obtain
\begin{align*}
\|fg\|_{L^1(X)} \leq C_g \|f\|_{B^{0,v}_{q}(X)}.
\end{align*}
This completes the proof of Lemma \ref{jz}.
\end{proof}

\begin{lemma}\label{Gk}
Let $(X,d,\mu)$ be an $s$-Ahlfors-regular quasi-metric measure space with $\mu(X)\geq 1$.
Then there exist pairwise disjoint measurable sets
\[ G_1,G_2,\ldots \]
such that
\[ 2^{-(k+1)} < \mu(G_k) \le 2^{-k}, \qquad k\ge1, \]
and each $G_k$ is a union of Christ cubes.
\end{lemma}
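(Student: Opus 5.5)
We will build the sets $G_k$ greedily out of Christ dyadic cubes. We use Christ's construction for spaces of homogeneous type, which applies since an Ahlfors-regular quasi-metric measure space is doubling. It gives constants $\delta\in(0,1)$, $a_0,C_1>0$ and families $\{Q^\ell_\alpha\}_{\alpha}$, $\ell\in\ZZ$, with the following properties:
\begin{enumerate}
\item cubes of the same level are pairwise disjoint and cover $X$ up to a null set;
\item cubes are nested across levels;
\item each $Q^\ell_\alpha$ contains a ball $B(z^\ell_\alpha,a_0\delta^\ell)$ and is contained in $B(z^\ell_\alpha,C_1\delta^\ell)$.
\end{enumerate}
By \eqref{upc}, every cube of level $\ell$ with $\delta^\ell\lesssim \operatorname{diam}(X)$ then satisfies $c'\delta^{\ell s}\le\mu(Q^\ell_\alpha)\le C'\delta^{\ell s}$. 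In particular, all cubes of sufficiently high level have measure smaller than any prescribed $\varepsilon>0$. Moreover, $X$ is non-atomic, since $\mu(\{x\})\le \mu(B(x,r))\le Cr^s\to0$.

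Fix a level $\ell_0$ so large that every cube of level $\ge\ell_0$ has measure at most $\frac14\cdot 2^{-k}$ for the relevant $k$. Better, choose the level depending on $k$: for each $k$ pick $\ell_k$ with $C'\delta^{\ell_k s}<2^{-(k+2)}$. Since $\mu(X)\ge1$ and the total mass we will remove is at most $\sum_k 2^{-k}\le 1$, there is always room to continue.

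We construct $G_1,G_2,\dots$ inductively. Suppose $G_1,\dots,G_{k-1}$ are disjoint finite unions of Christ cubes. Set $R_{k-1}=X\setminus(G_1\cup\dots\cup G_{k-1})$. This is again a union of cubes of any level $\ge\max_{i<k}\ell_i$, by nestedness, up to null sets. Its measure satisfies $\mu(R_{k-1})\ge \mu(X)-\sum_{i<k}2^{-i}>1-1+2^{-(k-1)}\ge 2^{-(k-1)}>2^{-k}$.

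Refine $R_{k-1}$ into cubes of level $L_k=\max(\ell_1,\dots,\ell_k)$, each of measure $<2^{-(k+2)}$. Enumerate them (there are countably many) and add them one at a time to $G_k$. Stop at the first moment the accumulated measure exceeds $2^{-(k+1)}$. Each added cube has measure $<2^{-(k+2)}$, so at stopping time the measure lies in $(2^{-(k+1)},\,2^{-(k+1)}+2^{-(k+2)})\subset(2^{-(k+1)},2^{-k}]$.

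The stopping moment is reached. Otherwise the full sum would be at most $2^{-(k+1)}$, while the cubes exhaust $R_{k-1}$, which has measure $>2^{-k}$; this is a contradiction, by countable additivity. Hence $G_k$ is a finite union of cubes, disjoint from the previous sets, with the required measure bounds. Induction finishes the proof.

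The main obstacle is the uniform smallness of cubes, i.e. the upper bound $\mu(Q^\ell_\alpha)\le C\delta^{\ell s}$. This follows from the containment $Q\subset B(z,C_1\delta^\ell)$ and the upper Ahlfors bound, which is valid for all radii since larger radii only help when the ball is all of $X$. When $\mu(X)=\infty$ (unbounded case) the same argument works verbatim. Note that the case distinction $0<r<\operatorname{diam}X$ is irrelevant for small cubes.
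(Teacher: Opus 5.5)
Your proof is correct and follows essentially the same route as the paper: you refine the remainder $R_{k-1}$ into Christ cubes of uniformly small measure and accumulate them greedily, with $\mu(X)\ge 1$ guaranteeing $\mu(R_{k-1})\ge 2^{-(k-1)}$, so the window $(2^{-(k+1)},2^{-k}]$ is always reachable. The differences are cosmetic. You stop the first time the sum exceeds $2^{-(k+1)}$, which gives a finite union, whereas the paper takes a maximal family with total measure at most $2^{-k}$. You also justify the uniform smallness of cubes directly from $Q\subset B(z,C_1\delta^\ell)$ and the upper Ahlfors bound, which is more explicit than the paper's appeal to descending chains.
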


\begin{proof}
Since  $(X,d,\mu)$ is an $s$-Ahlfors-regular quasi-metric measure space, it is nonatomic and admits a Christ dyadic system (see \cite{Chr90} and \cite{Hyt12})
$$ 
\mathscr D = \{Q_\alpha^n:\alpha\in I_n,\ n\in\mathbb Z\}.
$$
Therefore, every descending chain of Christ cubes has vanishing measure, which implies
\begin{equation}\label{nonatomic}
\sup_{\alpha\in I_n} \mu(Q_\alpha^n) \longrightarrow 0 \qquad \text{as} \quad n\to\infty.
\end{equation}

To prove Lemma \ref{Gk}, we take an increasing sequence of generations
$
n_1<n_2<\cdots
$
such that
\[
\sup_{\alpha\in I_{n_k}} \mu(Q_\alpha^{n_k}) \le 2^{-(k+1)} \qquad k\ge1.
\]

For $k=1$, 
enumerate the family
\[
\{Q_\alpha^{n_1}\}_{I_{n_1}}
=
\{Q_1^{n_1},Q_2^{n_1},\dots\}.
\]

Since $\mu(X)\geq 1$, we start with the empty family and successively select cubes, ensuring that after each addition, the total measure of the selected cubes remains at most  \(2^{-1}\).
Let \(\mathcal F_{1}\) denote the resulting family.
By construction,
\[
\sum_{Q\in\mathcal F_{1}}\mu(Q)
\le 2^{-1}.
\]
Moreover, the family is maximal with respect to inclusion.
Set
\[
G_1 = \bigcup_{Q\in\mathcal F_1}Q.
\]
Since every cube in
$\{Q_\alpha^{n_1}\}$
has measure at most $2^{-2}$,
the maximality of
$\mathcal F_1$
implies
$$
2^{-2} < \mu(G_1) \le 2^{-1}.
$$

Suppose that
$
G_1,\ldots,G_k
$
have been constructed.
Let
\[
R_k = X\setminus\bigcup_{j=1}^{k}G_j .
\]

Since $\mu(X)\geq 1$, we have
\[
\mu(R_k)=\mu(X)-\sum_{j=1}^{k}\mu(G_j) \geq 1-\sum_{j=1}^{k}2^{-j} = 2^{-k}.
\]

We now select $G_{k+1}$ from $R_k$.
Observe that $R_k$ is a union of Christ cubes.
For every Christ cube $Q\subset R_k$ with
$
\mu(Q)>2^{-(k+2)},
$
we replace $Q$ by all of its descendants at generation $n_{k+1}$.
Since
\[
\sup_{\alpha\in I_{n_{k+1}}}\mu(Q_\alpha^{n_{k+1}})
\le 2^{-(k+2)},
\]
this procedure yields a pairwise disjoint family of Christ cubes
$\{Q_{\beta_k}\}_{\beta_k}$ such that
\[
 R_k =\bigcup_{\beta_k} Q_{\beta_k}  
\]
and
\[
\mu(Q_{\beta_k}) \le 2^{-(k+2)}.
\]

Enumerate the family
\[
\{Q_{\beta_k}\}_{\beta_k}
=
\{Q_1,Q_2,\dots\}.
\]

Starting from the empty family,
select cubes successively whenever adding the cube keeps the total measure
at most \(2^{-(k+1)}\).
Let \(\mathcal F_{k+1}\) denote the resulting family.
By construction,
\[
\sum_{Q\in\mathcal F_{k+1}}\mu(Q)
\le 2^{-(k+1)}.
\]
Moreover, the family is maximal with respect to inclusion.
Set
\[
G_{k+1} = \bigcup_{Q\in\mathcal F_{k+1}}Q.
\]
Since every cube in
$\{Q_{\beta_k}\}_{\beta_k}$
has measure at most
$2^{-(k+2)}$,
the maximality of
$\mathcal F_{k+1}$
implies
\[
2^{-(k+2)} < \mu(G_{k+1}) \le 2^{-(k+1)}.
\]

Moreover, by $ G_{k+1}\subset R_k,$ we have
\[
G_{k+1}\cap G_j=\varnothing, \qquad 1\le j\le k.
\]
This completes the proof of Lemma \ref{Gk}.
\end{proof}
\begin{lemma}\label{lemEm}
Let $(X,d,\mu)$ be an $s$-Ahlfors-regular quasi-metric measure space with $\mu(X)\geq 1$.
Then there exists an absolute constant $C_0>0$ such that for every $m\ge 5$, 
there is a measurable set $E_m \subset X$ satisfying
\[
\mu(E_m) = 2^{-2m-2}, \qquad 
\mu(B\cap E_m) \le C_0 2^{-m} \mu(B)
\]
for every ball $B$ with 
$
\mu(B)\ge 2^{-m^2}.
$
Moreover, the sets $\{E_m\}_{m\ge5}$ may be chosen pairwise disjoint.
\end{lemma}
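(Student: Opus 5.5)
The plan is to build $E_m$ inside the set $G_m$ of Lemma \ref{Gk} by taking a uniformly thin slice of every sufficiently fine Christ cube contained in $G_m$. Disjointness of $\{E_m\}_{m\ge5}$ is then inherited from the disjointness of the $G_m$. The density bound $\mu(B\cap E_m)\lesssim 2^{-m}\mu(B)$ will follow because every ball of measure $\ge 2^{-m^2}$ is much larger than the cubes used, so $E_m$ has density at most about $2^{-m}$ at that scale.

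\textbf{Step 1 (choice of scale).} Fix $m\ge5$ and the Christ system $\mathscr D$ with parameter $\delta\in(0,1)$. Each cube $Q^n_\alpha$ is contained in a ball $B(z^n_\alpha, C_1\delta^n)$ and contains a ball $B(z^n_\alpha,c_1\delta^n)$. If a ball $B=B(x,r)$ satisfies $\mu(B)\ge 2^{-m^2}$, the upper bound in \eqref{upc} gives $r\ge r_m:=(2^{-m^2}/C)^{1/s}$, at least when $r<\operatorname{diam}(X)$; the other case is Step 3. I choose a generation $N=N(m)$ with $C_1\delta^{N}\le r_m$. I also require that $N$ is at least as fine as the generation of all the cubes composing $G_m$. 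Then $G_m$ is a disjoint union of cubes $\{Q_i\}$ of generation $N$.

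\textbf{Step 2 (construction).} Set $\theta_m:=2^{-2m-2}/\mu(G_m)$. Since $\mu(G_m)>2^{-m-1}$, we have $\theta_m<2^{-m-1}$. The measure $\mu$ is nonatomic, so Sierpi\'nski's theorem gives, for each $i$, a measurable set $F_i\subset Q_i$ with $\mu(F_i)=\theta_m\mu(Q_i)$. Put $E_m=\bigcup_i F_i$. Then
\[
\mu(E_m)=\theta_m\mu(G_m)=2^{-2m-2},
\]
and $E_m\subset G_m$, which gives pairwise disjointness in $m$.

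\textbf{Step 3 (density estimate).} Let $B=B(x,r)$ with $\mu(B)\ge2^{-m^2}$, and suppose first that $r<\operatorname{diam}(X)$. Any $Q_i$ meeting $B$ lies in a ball of radius $C_1\delta^N\le r$ centered within distance $r$ of $x$. By the quasi-triangle inequality, $Q_i\subset B(x,A(r+2AC_1\delta^N))\subset B(x,3A^2r)$. Using disjointness of the $Q_i$ and \eqref{upc},
\[
\mu(B\cap E_m)\le\sum_{Q_i\cap B\neq\varnothing}\theta_m\mu(Q_i)\le 2^{-m-1}\mu\bigl(B(x,3A^2r)\bigr)\le 2^{-m-1}C(3A^2)^s r^s\le C_0 2^{-m}\mu(B),
\]
with $C_0$ depending only on $A,s,c,C$. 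If instead $r\ge\operatorname{diam}(X)$, then $B=X$, and trivially $\mu(B\cap E_m)=\mu(E_m)\le 2^{-m}\mu(X)$. When $3A^2r\ge\operatorname{diam}(X)$ but $r<\operatorname{diam}(X)$, the bound $\mu(B(x,3A^2r))\le\mu(X)\le C\operatorname{diam}(X)^s\lesssim r^s\lesssim\mu(B)$ closes the argument; if $X$ is unbounded this case does not arise.

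\textbf{Main obstacle.} The delicate point is the bookkeeping in Step 3. The fine cubes meeting $B$ must be swallowed by a bounded dilate of $B$, which requires the generation $N$ to be tied to $2^{-m^2}$ through Ahlfors regularity. Near the scale $\operatorname{diam}(X)$ the upper bound in \eqref{upc} is only available for radii below $\operatorname{diam}(X)$, so the boundary cases have to be treated separately. The exact-measure requirement $\mu(E_m)=2^{-2m-2}$ is handled cleanly by nonatomicity via the uniform fraction $\theta_m$.
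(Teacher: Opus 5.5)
Your proposal is correct and follows essentially the same route as the paper: refine the set $G_m$ of Lemma \ref{Gk} into Christ cubes at a scale tied to $2^{-m^2}$ through \eqref{upc}, carve a proportionally thin piece out of each cube by nonatomicity, and bound $\mu(B\cap E_m)$ by about $2^{-m}$ times the measure of a fixed dilate of $B$ that contains every cube meeting $B$. The differences are minor. Your exact fraction $\theta_m=2^{-2m-2}/\mu(G_m)<2^{-m-1}$ gives $\mu(E_m)=2^{-2m-2}$ in one step, whereas the paper takes fractions in $[2^{-m-1},2^{-m}]$ and then trims. You also track the quasi-triangle constant ($3A^2$) and the radii near $\operatorname{diam}(X)$, which the paper glosses over, but your boundary case uses $\mu(X)\lesssim\operatorname{diam}(X)^s$, a standard fact that is not literally contained in \eqref{upc} as stated only for $r<\operatorname{diam}(X)$.
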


\begin{proof}
We use the same Christ dyadic system as in Lemma~\ref{Gk}:
\[
\mathscr D = \{ Q_\alpha^k : \alpha \in I_k, k \in \mathbb Z \}.
\]

By Lemma~\ref{Gk}, there exist pairwise disjoint sets
\[
G_5, G_6, \dots \subset X
\]
such that
\begin{equation}\label{Gm}
2^{-(m+1)} < \mu(G_m) \le 2^{-m}, \qquad m \ge 5,
\end{equation}
and each $G_m$ is a union of Christ cubes.

Fix $m$, since $(X,d,\mu)$ is a nonatomic space, we have
\[
\sup_{\alpha \in I_k} \mu(Q_\alpha^k) \to 0 \quad \text{as } k \to \infty.
\]
Let $B = B(x,r)$ with $\mu(B) \ge  2^{-m^2}$. 
Then by \eqref{upc}, we get $r\geq  (C^{-1}\mu(B))^{1/s}\geq  (C^{-1} 2^{-m^2})^{1/s} $.

Choose $N_m$ large enough so that
\[
\sup_{\alpha \in I_{N_m}} \mu(Q_\alpha^{N_m}) \le  2^{-m^2-4m},\qquad  \operatorname{diam}(Q_\alpha^{N_m})\leq (C^{-1} 2^{-m^2})^{1/s}/2.
\]

Refine all Christ cubes appearing in $G_m$ to generation $N_m$ to obtain a pairwise disjoint family
\[
\mathcal P_m = \{ P_\beta \}_\beta \subset G_m
\]
such that
\[
G_m = \bigcup_\beta P_\beta, \qquad \mu(P_\beta) \le  2^{-m^2-4m},\qquad  \operatorname{diam}(P_\beta)\leq (c^{-1} 2^{-m^2})^{1/s}/2.
\]

By nonatomicity, for each $P_\beta$, we can choose a measurable subset
$
F(P_\beta) \subset P_\beta
$
satisfying
\[
2^{-m-1} \mu(P_\beta) \le \mu(F(P_\beta)) \le 2^{-m} \mu(P_\beta).
\]

Define
$
F_m := \bigcup_\beta F(P_\beta).
$
Then
\[
2^{-m-1} \mu(G_m) \le \mu(F_m) \le 2^{-m} \mu(G_m).
\]
By \eqref{Gm}, we get 
\[
2^{-2m-2} < \mu(F_m) \le 2^{-2m}.
\]

Again by nonatomicity, we may select a measurable subset
\[
E_m \subset F_m
\]
with $\mu(E_m) = 2^{-2m-2}$.
Since the $G_m$'s are pairwise disjoint, so are the $E_m$'s.

Let 
\[
\mathcal P_m(B) := \{ P_\beta \in \mathcal P_m : P_\beta\cap B \neq \emptyset \}.
\]
Since each $P_\beta$ is a Christ cube has diameter much smaller than $r$, i.e. $\operatorname{diam}(P_\beta) \leq r/2$. 
Then, for any $P_\beta$ intersecting $B$, we have 
\[
P_\beta \subset B(x, r + \operatorname{diam}(P_\beta)) \subset B(x, 2 r).
\]
By \eqref{upc}, we obtain
\begin{align*}
\mu(F_m \cap B) 
&\leq \sum_{P_\beta \in \mathcal P_m(B) } \mu(F(P_\beta)) 
\le 2^{-m} \sum_{P_\beta \in \mathcal P_m(B) } \mu(P_\beta) \\
&\leq 2^{-m} \mu(C B) 
\le C_0 2^{-m} \mu(B),
\end{align*}
where $C_0>0$ is a constant.
This completes the proof of Lemma \ref{lemEm}.
\end{proof}

We now prove \eqref{example}. 
Without loss of generality, we assume $\mu(X)\geq 1$,
Indeed, if $\mu(X)<1$, we may rescale the measure by setting $d\tilde\mu = d\mu/\mu(X)$ so that $\mu(X)=1$, as the desired estimate is homogeneous under such scaling.

Let $m\geq 5$, $\lambda_m=\frac{1}{m^{2+v}(\log m)^2}$.
By Lemma \ref{lemEm}, there exists an absolute constant $C_0>0$ such that for every $m\ge 5$, 
there is a measurable set $E_m \subset X$ satisfying
\begin{equation}\label{BnEm}
\mu(E_m) = 2^{-2m-2}, \qquad 
\mu(B\cap E_m) \le C_0 2^{-m} \mu(B)
\end{equation}
for every ball $B$ with 
$
\mu(B)\ge 2^{-m^2}.
$ 
Moreover, the sets $\{E_m\}_{m\ge5}$ may be chosen pairwise disjoint.

Define
$$f=\sum_{m\geq 5} \frac{\lambda_m}{\mu(E_m)} \chi_{E_m}.$$
Then we have
\begin{align*}
\int_X |f| \bigl( \log(2+|f|) \bigr)^{1+v} \,d\mu
&= \sum_{m\geq 5}  \int_{E_m} \frac{|\lambda_m|}{\mu(E_m)} \biggl( \log\Bigl(2+ \frac{|\lambda_m|}{\mu(E_m)} \Bigr) \biggr)^{1+v}  \,d\mu
\\
&=\sum_{m\geq 5} \frac1{ m^{2+v} (\log m)^2}
\biggl( \log\Bigl( 2+
\frac{2^{2m+2}}{ m^{2+v} (\log m)^2}\Bigr) \biggr)^{1+v}\\
&\lesssim \sum_{m\geq 5} \frac{(m+1)^{1+v}}{ m^{2+v}  (\log m)^2}<\infty.
\end{align*}
Thus we get $f\in L(\log^+\!\!L)^{1+v}(X) $.

Define 
$$
g=\sum_{m\geq 5} m^{2(1+v)} \chi_{E_m}.
$$
Since $v>-1$, we have
$$
\|fg\|_{L^1(X)} =\sum_{m\geq 5} \frac{\lambda_m}{\mu(E_m)} m^{2(1+v)} \mu(E_m)= \sum_{m\geq 5} \frac{m^{v}}{(\log m)^2}=\infty.
$$
Hence, by Lemma \ref{jz}, to prove $f\notin B^{0,v}_{q}(X)$, it remains to show
$g$ satisfies \eqref{jzy}.

For $B\subset X$, if $\mu(B)\geq 1$, we have
\begin{align*}
\int_{B} |g(x)|^{q'}d\mu(x) 
&=\sum_{m\geq 5} m^{2(1+v)q'}\mu(B\cap E_m) 
\leq \sum_{m\geq 5} m^{2(1+v)q'}\mu( E_m) \\
&\leq  \sum_{m\geq 5} m^{2(1+v)q'}2^{-2m-2} 
\lesssim 1.
\end{align*}
Then we obtain
$$
\Big(\frac{1}{\mu(B)}\int_{B} |g(x)|^{q'}d\mu(x)  \Big)^{1/q'}\lesssim  1+ \frac1{1+v} \bigl(\log^+(\frac1{\mu(B)})\bigr)^{1+v}. 
$$

If $0<\mu(B)<1$,
we set $$\delta_{B,v} = 1+ \frac1{1+v} \bigl(\log(\frac1{\mu(B)})\bigr)^{1+v},\quad \gamma_{B} = 1+ \log_2(\frac{1}{\mu(B)}). $$ 
Then by \eqref{abr}, we have $(\gamma_{B})^{1+v}\lesssim \delta_{B,v}$.

Recalling that $
g=\sum_{m\geq 5} m^{2(1+v)} \chi_{E_m}
$, we split the resulting sum according to $\gamma_{B}$.
This yields
\begin{align*}
\int_{B} |g(x)|^{q'}d\mu(x) 
&=\sum_{m\geq 5} m^{2(1+v)q'}\mu(B\cap E_m) \\
&= \sum_{5\leq m \leq \sqrt{\gamma_{B}} } m^{2(1+v)q'}\mu(B\cap E_m)
+\sum_{ m\geq  \sqrt{\gamma_{B}}  } m^{2(1+v)q'}\mu(B\cap E_m)\\
&=:I_1+I_2.
\end{align*}

For $I_1$, since $\{E_m\}_{m\geq 5}$ are pairwise disjoint, we have 
\begin{align*}
I_1
 &= \sum_{ 5 \leq m \leq \sqrt{\gamma_{B}} } m^{2(1+v)q'} \mu(B\cap E_m)\\
 &\leq (\gamma_{B})^{(1+v)q'}  \sum_{5\leq m \leq \sqrt{\gamma_{B}} } \mu(B\cap E_m)\\
 &\lesssim (\delta_{B,v})^{q'}\mu(B).
\end{align*}

For $I_2$, since $ m \geq \sqrt{\gamma_{B}} $, we have
$
 \mu(B)\geq 2^{-m^2}.
$
By \eqref{BnEm}, we obtain
$$
\mu(B\cap E_m)\leq C_0 2^{-m} \mu(B).
$$
Then we have
\begin{align*}
I_2&=\sum_{ m\geq  \sqrt{\gamma_{B}}  } m^{2(1+v)q'}\mu(B\cap E_m)\\
&\leq C_0 \sum_{ m\geq  \sqrt{\gamma_{B}} } m^{2(1+v)q'}2^{-m} \mu(B)\\
&\lesssim \mu(B).
\end{align*}
It follows that
$$
\int_{B} |g(x)|^{q'}d\mu(x) \leq I_1+I_2 \lesssim (\delta_{B,v})^{q'}\mu(B).
$$
This implies 
$$
\Big(\frac{1}{\mu(B)}\int_{B} |g(x)|^{q'}d\mu(x)  \Big)^{1/q'}\lesssim   1+ \frac1{1+v} \bigl(\log^+(\frac1{\mu(B)})\bigr)^{1+v} . 
$$
Hence, $g$ is a function satisfying \eqref{jzy}.

Therefore $f\notin B^{0,v}_{q}(X)$, since otherwise Lemma \ref{jz} would imply $fg\in L^1(X)$, a contradiction.
This completes the proof of Theorem \ref{thmfan}.

\section{Proof of Theorem \ref{3log}}\label{sec4}
\setcounter{equation}{0}

To prove Theorem \ref{3log}, it suffices to 
construct a function
\[
f\in B^{0,v}_{q}(X)
\setminus
L(\log^+\!\!L)^{1+v}\Psi(L)(X).
\]

By \eqref{Bbh}, it remains to consider the case \(q=\infty\).
Without loss of generality, we may assume that \(\mu(X)\ge 1\); indeed, if \(\mu(X)<1\), then \(\mu(X)\) is finite and we can normalize to \(\mu(X)=1\). 

Since \((X,d,\mu)\) is nonatomic, we may now select a sequence of pairwise disjoint balls \(\{B_j\}_{j\ge1}\) satisfying
\[
e^{-t_j-1}\leq \mu(B_j)\leq e^{-t_j}, \qquad j\ge1,
\] 
where $\{t_j\}_{j\geq 1}$ is an increasing sequence will be choose later.
Set
$
\lambda_j = \frac{1}{j^2 (t_j)^{1+v}}.
$
Define
\[
a_j(x) = \frac{\chi_{B_j}(x)}{\mu(B_j)}, \quad  f(x) = \sum_{j=1}^{\infty} \lambda_j a_j(x).
\]
Then we have
\[
\supp a_j\subseteq B_j,\quad \|a_j\|_{L^\infty(B_j)} = \frac1{\mu(B_j)}.
\]
Hence, each $a_j$ is an $\infty$-block.
Moreover,
\begin{align*}
&\sum_{j=1}^{\infty}|\lambda_j|\Big(1+ \frac{1}{1+v} \bigl( \log^+(\frac1{\mu(B_j)}) \bigr)^{1+v} \Big)
\lesssim \sum_{j=1}^{\infty} \frac1{ j^2 (t_j)^{1+v} } \Big( 1+\frac1{1+v}(t_j)^{1+v}\Big)<\infty.
\end{align*}
Therefore,
$
f\in B^{0,v}_{\infty}(X).
$

We now prove that
$
f\notin L(\log^+\!\!L)^{1+v}\Psi(L)(X).
$
Indeed, since $\{B_j\}_{j\geq 1}$ are pairwise disjoint, we have
\begin{align*}
&\int_X |f| \bigl( \log(2+|f|) \bigr)^{1+v} \Psi(|f|) \,d\mu\\
&= \sum_{j=1}^{\infty}  \int_{B_j} \frac{|\lambda_j|}{\mu(B_j)} \biggl( \log\Bigl(2+ \frac{|\lambda_j|}{\mu(B_j)} \Bigr) \biggr)^{1+v} \Psi( \frac{|\lambda_j|}{\mu(B_j)} ) \,d\mu
\\
&\geq \sum_{j=1}^{\infty} \frac1{ j^2 (t_j)^{1+v}}
\biggl( \log\Bigl( 2+
\frac{ e^{t_j}}{j^2(t_j)^{1+v}}\Bigr) \biggr)^{1+v}
\Psi( \frac{e^{t_j} }{ j^2(t_j)^{1+v}})\\
&=:I.
\end{align*}

Since $\Psi(t)\to \infty$ as $t\to\infty$, we can choose a sequence $\{t_j\}_{j\geq 1}$ that increases to infinity with $t_j\ge 1$, 
$$
\Psi( \frac{e^{t_j} }{ j^2(t_j)^{1+v}})\geq  j \quad  \text{for all~} j.
$$
On the other hand, as $t_j\to \infty$, we have
$$
\log\Bigl( 2+
\frac{ e^{t_j}}{j^2(t_j)^{1+v}}\Bigr)\approx t_j.
$$
It follows that there exists a constant $j_0>1$ such that 
\begin{align*}
I \gtrsim \sum_{j=j_0}^{\infty} \frac1{ j^2 (t_j)^{1+v}}
t_j^{1+v}
j
=
\sum_{j= j_0}^{\infty}  \frac{1}{j}
= \infty.
\end{align*}
Therefore,
$
f \notin L(\log^+\!\!L)^{1+v}\Psi (L)(X).
$
This completes the proof of Theorem \ref{3log}.

\section{Proof of Theorem \ref{thm}}\label{sec2}
\setcounter{equation}{0}%公式从1开始编号
Let $1<q\le \infty$ and $v>-1$.
To prove Theorem \ref{thm}, it suffices to establish \eqref{key1} and \eqref{key2}. The proof of \eqref{key1} follows the same line as that of \eqref{e1}, with balls replaced by measurable sets. We therefore omit the details.

\subsection{Proof of (\ref{key2})}
By \eqref{key1}, to prove \eqref{key2} it suffices to show 
\begin{equation}\label{B1}
 L(\log^+\!\!L)^{1+v}(X)\subseteq \mathscr{B}^{0,v}_q(X)
\end{equation}
under the assumption that $\mu(X)<\infty$.

Let $v>-1$ and
$
f\in L(\log^+\!\!L)^{1+v}(X).
$
Since 
$$
{\mathscr{B}}^{0,v}_q(X) \subseteq {\mathscr{B}}^{0,v}_p(X),\quad 1<p<q\le \infty,
$$ 
it suffices to prove that
$
f\in \mathscr{B}^{0,v}_{\infty}(X).
$

Set
\begin{align*}
E_0
&=\{x\in X: |f(x)|<2\},\\
E_j &=\{x\in X: 2^j\le |f(x)|<2^{j+1}\},
\quad j\ge1.
\end{align*}

For $j\ge0$, define
\[
\tilde a_j(x) = f(x)\chi_{E_j}(x).
\]
Then we have the decomposition
\[
f = \sum_{j=0}^{\infty} \tilde a_j.
\]

Since $\mu(X)<\infty$, we have $\mu(E_j)<\infty$ for all $j\geq 0$.
Without loss of generality, we may assume $\mu(E_j)>0$ for all $j\geq 0$.
Set $$a_0=\frac{\tilde a_0}{2\mu(E_0)}.$$
Then
$
\tilde a_0= 2\mu(E_0) a_0,
$
and
$$
\supp a_0\subseteq E_0, \quad \|a_0\|_{L^{\infty}(E_0)}\leq \frac{1}{\mu(E_0)}.
$$
Hence, $a_0$ is a generalized $\infty$-block.

Moreover, 
$$
2\mu(E_0)  (1+\frac{1}{1+v}\Big(\log^+ \frac{1}{\mu(E_0)}\Big)^{1+v})<\infty,
$$ 
which implies that $\tilde a_0\in  \mathscr{B}^{0,v}_\infty(X)$. 
Therefore, it remains to prove that
\begin{equation}\label{Om1}
f_1(x) := \sum_{j=1}^{\infty} \tilde a_j(x) \in \mathscr{B}^{0,v}_\infty(X).
\end{equation}

Observe that the sets
$
\{E_j\}_{j\ge1}
$
are pairwise disjoint.
Since $|\tilde a_j(x)|\ge 2^j$ for $x\in E_j$, we have
\[
\log(2+|\tilde a_j(x)|)\ge \log(2+2^j)\ge j\log 2.
\]
Then
\begin{align*}
\int_X |f_1| \Big( \log(2+|f_1|) \Big)^{1+v} \,d\mu
&= \sum_{j=1}^{\infty} \int_{E_j} |\tilde a_j| \Big( \log(2+|\tilde a_j|) \Big)^{1+v} \,d\mu
\\
&\ge \sum_{j=1}^{\infty} \mu(E_j) \,j^{1+v} \,2^j (\log2)^{1+v}.
\end{align*}

Since
$
\|f_1\|_{L(\log^+\!\!L)^{1+v}(X)} \le \|f\|_{L(\log^+\!\!L)^{1+v}(X)} < \infty,
$
we deduce that
\begin{equation}\label{key}
\Lambda := \sum_{j=1}^{\infty} 2^j j^{1+v} \mu(E_j) < \infty.
\end{equation}

Next, for $j\ge1$, set
\[
\lambda_j = 2^{j+1} \mu(E_j), \quad
a_j(x) = \lambda_j^{-1} \tilde a_j(x).
\]
Then we can rewrite $f_1$ as
\[
f_1(x) = \sum_{j=1}^{\infty} \lambda_j a_j(x).
\]
Moreover, for each $j\ge1$, we have
$
\supp a_j \subseteq E_j,
$
and
\begin{align*}
\|a_j\|_{L^\infty(E_j)}
= \lambda_j^{-1} \|\tilde a_j\|_{L^\infty(E_j)}
\le \lambda_j^{-1} 2^{j+1}
= \frac1{\mu(E_j)}.
\end{align*}
Hence, each $a_j$ is a generalized $\infty$-block.
Thus, to show \eqref{Om1}, it remains to verify
\begin{equation}\label{Om2}
\sum_{j=1}^{\infty} |\lambda_j| \Big( 1+\frac1{1+v} \bigl(\log^+(\frac{1}{\mu(E_j)})\bigr)^{1+v}\Big)<\infty.
\end{equation}

Set
\begin{align*}
\Gamma_0 & =\{j\ge1:\mu(E_j)\ge1\},\quad
\Gamma_1  =\{j\ge1:2^{-4j}\le \mu(E_j)\le1\},
\\
\Gamma_2 & =\{j\ge1:\mu(E_j)<2^{-4j}\}.
\end{align*}
Then
\begin{align*}
&\sum_{j=1}^{\infty} |\lambda_j| \Big( 1+ \frac1{1+v} \bigl(\log^+(\frac{1}{\mu(E_j)})\bigr)^{1+v}\Big)\\
&= \sum_{j\in\Gamma_0}|\lambda_j| +\sum_{j\in\Gamma_1} |\lambda_j| \Big(1+\frac1{1+v} \bigl( \log(\frac{1}{\mu(E_j)}) \bigr)^{1+v} \Big)
\\
&\quad
+ \sum_{j\in\Gamma_2} |\lambda_j| \Big( 1+ \frac1{1+v} \bigl( \log(\frac{1}{\mu(E_j)}) \bigr)^{1+v} \Big)
\\
&=:I+II+III.
\end{align*}

For $I$, by \eqref{key}, we have
\begin{align*}
I
&= \sum_{j\in\Gamma_0} 2^{j+1}\mu(E_j) \lesssim \Lambda <\infty.
\end{align*}

For $II$, since
$
\mu(E_j)\ge 2^{-4j}
$
on $\Gamma_1$, we have
\[
\log^+ \frac1{\mu(E_j)} \le 4j\log2.
\]
Hence, by \eqref{key}, we have
\begin{align*}
II \le \sum_{j\in\Gamma_1} 2^{j+1} \mu(E_j) \Big(1+\frac{1}{1+v} \big(4j\log2\big)^{1+v}\Big)\lesssim \Lambda< \infty.
\end{align*}

For $III$, using
\[
\log t \le\frac1\delta t^\delta, \quad \delta>0,\ t>1,
\]
we obtain
\begin{align*}
III
&\le
\sum_{j\in\Gamma_2} 2^{j+1} \mu(E_j) \Big( 1+ \frac{1}{1+v} \big( \frac{1}\delta \mu(E_j)^{-\delta} \big)^{1+v} \Big)
\\
&\le \sum_{j\in\Gamma_2} 2^{j+1} \mu(E_j) + \frac{1}{(1+v)\delta^{1+v}} \sum_{j\in\Gamma_2} 2^{j+1} \mu(E_j)^{1-(1+v)\delta}
\\
&=:III_1+III_2.
\end{align*}

By \eqref{key}, we have
$
III_1 \lesssim \Lambda < \infty.
$
For $III_2$, since
$
\mu(E_j)\le 2^{-4j}
$
on $\Gamma_2$, choosing
$
\delta=\frac1{2(1+v)}
$
yields
\begin{align*}
III_2
&\lesssim \sum_{j\in\Gamma_2} 2^j (2^{-4j})^{1/2}
\lesssim \sum_{j\in\Gamma_2} 2^{-j} < \infty.
\end{align*}
Thus,
\[
III \lesssim III_1+III_2 < \infty.
\]

Therefore, we obtain
\begin{align*}
\sum_{j=1}^{\infty} |\lambda_j| \Big( 1+ \frac1{1+v} \bigl( \log^+(\frac{1}{\mu(E_j)}) \bigr)^{1+v} \Big) 
\lesssim I+II+III < \infty.
\end{align*}
This proves \eqref{Om2} and completes the proof of \eqref{key2}.
Hence, the proof of Theorem \ref{thm} is complete.

\section{Proof of Theorem \ref{B1case}}\label{sec5}
\setcounter{equation}{0}

Let $v>-1$.
We first prove that $B^{0,v}_1(X) \subseteq L^1(X)$ and $\|f\|_{L^1(X)}\le \|f\|_{B^{0,v}_1(X)}$.
Fix
$
f\in B^{0,v}_1(X).
$
Then there exists a decomposition
\[
f=\sum_{j=1}^{\infty}\lambda_j a_j,
\]
where $\lambda_j\in\mathbb C$ and each $a_j$ is a $1$-block satisfying
\[
\supp a_j\subseteq B_j,
\quad
\|a_j\|_{L^1(B_j)}\le1,
\]
and
\[
\sum_{j=1}^{\infty} |\lambda_j| \Big( 1+ \frac1{1+v} \bigl(\log^+(\frac1{\mu(B_j)})\bigr)^{1+v}  \Big) <\infty.
\]

Observe that for any block decomposition, we have
\begin{align*}
\|f\|_{L^1(X)}
\le
\sum_{j=1}^{\infty} |\lambda_j| \|a_j\|_{L^1(B_j)}
\le
\sum_{j=1}^{\infty} |\lambda_j| \Big( 1+ \frac1{1+v} \bigl( \log^+(\frac1{\mu(B_j)}) \bigr)^{1+v}\Big).
\end{align*}
By the definition of the norm
$
\|\cdot\|_{B^{0,v}_1(X)},
$
we get
\[
\|f\|_{L^1(X)}
\le
\|f\|_{B^{0,v}_1(X)}.
\]

We now prove $L^1(X) \subseteq B^{0,v}_1(X)$.
Let
$
f\in L^1(X).
$ 
Without loss of generality, we may assume
$
\|f\|_{L^1(X)}\neq0.
$
We distinguish two cases according to the size of $\mu(X)$.

\textbf{Case 1: $\mu(X)\leq 1$.}

Since $(X,d,\mu)$ is $s$-Ahlfors regular and $\mu(X)\leq 1$,
we have $\operatorname{diam}(X)<\infty$.
Consequently, for any fixed $x_0\in X$,
$
X=B(x_0,2\operatorname{diam}(X)),
$
so that $X$ itself is a ball. 

Set $a=f/\|f\|_{L^1(X)}$, then we have
$$
f=\|f\|_{L^1(X)} a.
$$

Observe that  
$$\supp a \subseteq X,\quad  \|a\|_{L^1(X)}\leq 1.$$
Hence $a$ is a $1$-block.
Moreover,
$$
\|f\|_{L^1(X)} \Big(1+\frac{1}{1+v}\big(\log^+ (\frac{1}{\mu(X)})\big)^{1+v}\Big)<\infty.
$$
Therefore, $f\in B^{0,v}_1(X)$ and 
$$
\|f\|_{B^{0,v}_1(X)} \leq \|f\|_{L^1(X)} \Big(1+\frac{1}{1+v}\big(\log^+ (\frac{1}{\mu(X)})\big)^{1+v}\Big).
$$

\textbf{Case 2: $\mu(X)> 1$.}

Fix \(x_0\in X\). Since $d(x,x_0)<\infty$ for every $x\in X$, we have \(B(x_0,r)\uparrow X\) as \(r\to\infty\).
Thus, there exists \(R>0\) such that
\[
\mu(B(x_0,R)) \ge 1.
\]
Consequently,
\[
X = \bigcup_{j=1}^{\infty} B(x_0, jR).
\] 

Define 
\begin{align*}
  f_1(x)&=f(x)\chi_{B(x_0,R)}(x), \\
  f_j(x)&=f(x)\chi_{B(x_0,jR)\setminus B(x_0,(j-1)R)}(x), \text{~for~} j\geq 2.
\end{align*}
Set
\[
\lambda_j=\|f_j\|_{L^1(X)},\quad 
a_j=
\begin{cases}
\dfrac{f_j}{\|f_j\|_{L^1(X)}}, & \lambda_j\neq0,\\
0, & \lambda_j=0.
\end{cases}
\]
Then 
$$f = \sum_{j=1}^\infty \lambda_j a_j,\quad
\|f\|_{L^1(X)}=\sum_{j=1}^\infty |\lambda_j|.$$

Observe that
$$\supp a_j\subseteq B(x_0,jR),\quad 
\|a_j\|_{L^1(X)}\le1.
$$
Hence, each $a_j$ is a $1$-block.

Furthermore, since
$
B(x_0,R)\subseteq B(x_0,jR),
$
we have
\[
\mu(B(x_0,jR)) \ge \mu(B(x_0,R)) \ge 1.
\]
It follows that
\begin{align*}
\sum_{j=1}^\infty |\lambda_j| \Big(1+ \frac1{1+v} \bigl( \log^+(\frac1{\mu(B(x_0,jR))})\bigr)^{1+v} \Big)
=\sum_{j=1}^\infty |\lambda_j|
 =\|f\|_{L^1(X)}<\infty.
\end{align*}
Hence,
$
f \in B^{0,v}_1(X)
$
and 
$$
\|f\|_{B^{0,v}_1(X)} \leq \|f\|_{L^1(X)}\leq \|f\|_{L^1(X)} \Big(1+\frac{1}{1+v}\big(\log^+ (\frac{1}{\mu(X)})\big)^{1+v}\Big).
$$
This completes the proof of Theorem \ref{B1case}.

\section{Proof of Theorem \ref{Bqcase}}\label{sec6}
\setcounter{equation}{0}

It follows directly from the definition that
$
\tilde{B}_{q,0}(X)\subseteq L^1(X).
$
For $f\in \tilde{B}_{q,0}(X)$, there exists a decomposition
\[
f = \sum_{j=1}^{\infty} \lambda_j a_j,
\]
where each $a_j$ is a $(q,0)$-block supported on a ball $B_j\subseteq X$,
$\lambda_j\in\mathbb{C}$, and
$
\sum_{j=1}^{\infty} |\lambda_j|<\infty.
$

Observe that
$$\|a\|_{L^1(B(x,t))} \leq \mu(B(x,t))^{1/q'} \|a\|_{L^q(B(x,t))} \leq 1.$$
Then we have
$$
\|f\|_{L^1(X)}\leq \sum_{j=1}^{\infty} |\lambda_j|\|a_j\|_{L^1(B_j)}\leq \sum_{j=1}^{\infty} |\lambda_j|<\infty.
$$

By the definition of the norm on $\tilde{B}_{q,0}(X)$, we have
$$
\|f\|_{L^1(X)}\leq \|f\|_{\tilde{B}_{q,0}(X)}.
$$

We now turn to the proof of $\|f\|_{\tilde{B}_{q,0}(X)}\lesssim \|f\|_{L^1(X)}$.
Let
$
f\in L^1(X).
$
By the definition of $\tilde{B}_{q,0}(X)$, we have 
$$
\|f\|_{\tilde{B}_{p,0}(X)} \leq \|f\|_{\tilde{B}_{q,0}(X)},\quad 1<p\leq q\leq \infty.
$$
Thus, it suffices to show that
$
\|f\|_{\tilde{B}_{\infty,0}(X)}\lesssim \|f\|_{L^1(X)}.
$

Set
\[
E_l =\{x\in X:2^l\le |f(x)|<2^{l+1}\},
\quad
l\in\mathbb Z.
\]
Without loss of generality, we may assume $\mu(E_l)>0$ for all $l\in\mathbb Z$.
Then
\begin{equation}\label{fL1}
\|f\|_{L^1(X)} = \sum_{l\in\mathbb Z} \|f\chi_{E_l}\|_{L^1(X)}
\approx
\sum_{l\in\mathbb Z} 2^l\mu(E_l).
\end{equation}

Since $(X,d,\mu)$ is a space of homogeneous type with nonzero Borel regular measure, the Lebesgue differentiation theorem (see \cite{CW71, Tol04}) implies that for a.e. $x\in E_l$, 
$$
\lim_{r\to 0}\frac{\mu(B(x,r)\cap E_l)}{\mu(B(x,r))}=1.
$$
Thus for a.e. 
$x\in E_l$, there exists $r_x>0$ such that
$$
\mu(B(x,r_x)\cap E_l)\geq 1/2 \mu(B(x,r_x)).
$$

Applying the Vitali covering lemma on spaces of homogeneous type (see, for example, \cite[Chapter 3, Theorem 1.2]{CW71}) to the family $
\{B(x,r_{x})\}_{x\in E_l}
$,
we obtain a countable pairwise disjoint subcollection
$
\{B(x_{l,j},r_{l,j})\}_{j\ge1}
$
such that
\[
E_l \subseteq \bigcup_{j\ge1}  B(x_{l,j}, C r_{l,j}),
\]
where $C>0$ is a constant.

Since the balls $\{B(x_{l,j},r_{l,j})\}_{j\in J_l}$ are pairwise disjoint and satisfy
$$
\mu(B(x_{l,j},r_{l,j})) \leq 2 \mu(B(x_{l,j},r_{l,j}) \cap  E_l),
$$
by \eqref{upc}, we obtain
\begin{equation}\label{BE}
\begin{aligned}
\sum_{j\ge1} \mu(B(x_{l,j}, C r_{l,j}))
&\lesssim \sum_{j\ge1} \mu(B(x_{l,j},r_{l,j}))\\
&\lesssim \sum_{j\ge1} \mu(B(x_{l,j},r_{l,j}) \cap  E_l)\\
&\lesssim \mu(E_l).
\end{aligned}
\end{equation}

For $l\in\mathbb Z$, set 
\begin{align*}
F_{l,1}&=B(x_{l,1}, C r_{l,1})\cap E_l,\\
F_{l,j}&=(B(x_{l,j}, C r_{l,j})\setminus F_{l,j-1}) \cap E_l, \quad j \geq 2. 
\end{align*}
Then $\{F_{l,j}\}_{l,j}$ are pairwise disjoint.

Define
\[
\widetilde a_{l,j}(x) := \frac{ f(x)\chi_{F_{l,j}}(x) }{ 2^{l+1}\mu(B(x_{l,j}, C r_{l,j}))}.
\]
Then
\[
f = \sum_{l\in\mathbb Z} \sum_{j\ge1} 2^{l+1} \mu(B(x_{l,j}, C r_{l,j})) \widetilde a_{l,j}.
\]
Observe that
$
\supp \widetilde a_{l,j} \subseteq B(x_{l,j}, C r_{l,j})
$
and
\begin{align*}
\|\widetilde a_{l,j}\|_{L^\infty(B(x_{l,j}, C r_{l,j}))}
=\frac{ \|f\chi_{F_{l,j}}\|_{L^\infty(X)} }{2^{l+1}\mu(B(x_{l,j}, C r_{l,j}))}
\le \frac{1}{\mu(B(x_{l,j}, C r_{l,j}))}.
\end{align*}
Hence each $\widetilde a_{l,j}$ is an $\infty$-block.

Moreover, by \eqref{fL1} and \eqref{BE}, we obtain
\begin{align*}
\sum_{l\in\mathbb Z}\sum_{j\ge1} 2^{l+1}\mu(B(x_{l,j}, C r_{l,j}))
\lesssim \sum_{l\in\mathbb Z} 2^l\mu(E_l)
\lesssim \|f\|_{L^1(X)}.
\end{align*}
Therefore,
$
\|f\|_{ \tilde{B}_{\infty,0}(X)} \lesssim \|f\|_{L^1(X)}.
$
This completes the proof of Theorem \ref{Bqcase}.

\bigskip
\noindent
\textbf{Data availability} No data was used for the research described in the article.

\section*{Declarations}
\noindent
\textbf{Conflict of interest}
 The authors have no conflict of interest.

\end{document}